\let\csname equation*\endcsname\relax 		
\let\csname endequation*\endcsname\relax 		
\newtheorem{thm}{Theorem}[section]
\newtheorem{lem}[thm]{Lemma}
\newtheorem{prop}[thm]{Proposition}
\newtheorem{mydef}[thm]{Definition}
\newtheorem{rem}[thm]{Remark}
\newtheorem{assumption}[thm]{Assumption}
\newcommand\amf[1]{{\color{black}{#1}}}
\begin{document}
\title[Solving an inverse problem using conjugation]
     {Solving the inverse problem for an ordinary differential equation using conjugation}
\author{Alfaro Vigo, D. G.$^1$, \'Alvarez, A. C.$^1$, Chapiro, G.$^3$, Garc\'ia, G.C.$^2$  and Moreira, C. G. T. A.$^4$}
\address{$^1$ Departamento de Ci\^encia da Computa\c{c}\~ao, Universidade Federal do Rio de Janeiro, Caixa Postal
	68.530, CEP 21941-590, Rio de Janeiro, RJ, Brazil.}
\address{$^2$ Departamento de Matem\'atica y Ciencia de la Computaci\'on, Universidad de Santiago de Chile, Casilla 307, Correo 2, Santiago de Chile, Chile.}
\address{$^3$ Departamento de Matem\'atica, Universidade Federal de Juiz de Fora, Juiz de Fora, MG 36036-900, Brazil.}

\address{$^4$ IMPA, Rio de Janeiro, RJ 22460-320, Brazil.}

\ead{dgalfaro@dcc.ufrj.br, amaury@dcc.ufrj.br, grigori@ice.ufjf.br,\\galina.garcia@usach.cl, gugu@impa.br}

\begin{abstract}


{We consider the following inverse problem for an ordinary differential equation (ODE): given a set of data points $P=\{(t_i,x_i),\; i=1,\dots,N\}$, find an ODE $x^\prime(t) = v (x)$ 
that admits a solution $x(t)$ such that $x_i \approx x(t_i)$ as closely as possible. The key to the proposed method is to find approximations of the recursive or discrete propagation function $D(x)$ from the given data set. Afterwards, we determine the field $v(x)$, using the conjugate map defined by Schr\"{o}der's equation and the solution of a related Julia's equation. Moreover, our approach also works for the inverse problems where one has to determine an ODE from multiple sets of data points. 
We also study existence, uniqueness, stability and other properties of the recovered field $v(x)$. Finally, we present several numerical methods for the approximation of the field $v(x)$ and provide some illustrative examples of the application of these methods.}

\end{abstract}

\pacs{
02.30.Zz,	
02.30.Sa.	
}
\vspace{2pc}
\noindent{\it Keywords}: functional equation, Schr\"{o}der’s functional equation, Julia's equation, parameter estimation.
\\ 
\maketitle


\section{Introduction}

In this paper we solve the problem for determining an ordinary differential equation
(ODE) from given data using conjugacy methods. 
{
In that sense, from a set of data points $(t_i,x_i)$, $i=1,\dots,N$, we find an ODE $x^\prime(t) = v (x(t))$ 
that admits a solution $x(t)$ such that $x_i \approx x(t_i)$ as closely as possible.  
More generally, given multiple sets of data points $P^{(k)}=\{(t^{(k)}_i,x^{(k)}_i),\; i=1,\dots,N^{(k)}\}$, $k=1,\dots,M$, we obtain an ODE admitting solutions $x^{(k)}(t)$ such that $x^{(k)}_i \approx x^{(k)}(t_i)$ as closely as possible.  
 
The key to the proposed method consists in finding an approximation of the discrete propagation function $D$ from the given data. Afterwards, we determine the field $v(x)$, using the conjugate map defined by Schr\"{o}der's equation and the solution of a related Julia's equation.}

{Explicit solutions of Schr\"{o}der's equation can be obtained using analytical and asymptotic methods (see for instance \cite{curtright2011approximate,curtright2009evolution,curtright2010chaotic,kucs2}). However, it is not an easy task, in general. Even when if an asymptotic procedure could be completed, it relies on the knowledge of function $D$, therefore it can fail when the function $D$ is known only approximately. In this paper, we developed numerical procedures to approximate the solution of Schr\"{o}der's equation in the general case.
}

{Methods for the identification of the iteration (or discrete propagation) function $D$ in a discrete dynamical system are widely known (see for instance \cite{brunton2016discovering,nelson2005nonlinear,roy2017dynamic,zhang2006discrete}).
	Our work is fundamentally based on the fact that the iteration function can be determined from the solution data set at some points. As many of the known methods are applicable, we do not delve into that direction, but propose instead a method based on interpolation that has successfully worked in the cases studied here.}

 We select this method as it has been employed to solve  the  Schr\"{o}der's equation  by exact or asymptotic methods \cite{kucs1,kucs2,curtright2009evolution,curtright2011renormalization}, and the Julia's equation by numerical methods \cite{alvarez2005fast,alva13,small2007functional}. The conjugacy method is applicable in this problem
as it allows for establishing a direct and stable relationship between the function $D$ leaving the solution from the time $t$ to $t+1$ and the field of the ordinary differential equation.

The problem  of searching the field knowing 
the solution at some time $t_i$, $i=1,\dots,n$ is studied
in \cite{kunze1999solving,kunze2009using}, where authors  determine an optimal
vector field associated with contractive Picard operators.
 In \cite{kunze2012solving}, using a more general framework, such inverse problems are solved  in another direction using the Collage Theorem. In both formulations the inverse problem is solved and a recovery method for the solution is proposed.
 Regarding  the inverse problem studied here, the following open problem is proposed in \cite{curtright2009evolution}: Can the conjugacy method be applied in all situations? In this spirit, we solve such a problem by proposing a robust numerical technique to solve a functional equation for a class of functions broader than other previously established. Moreover, we show stability results and
 determine  sufficient conditions on data input function to get a priori information of the solution, such as its regularity and monotonicity. 
 
We provide analytical and numerical procedures for the recovery of the field, taking as an input the solution at some points uniformly distributed in time. To this end, we use the conjugacy method, which consists in  solving functional equations (see \cite{kucs1}).
This method was successfully used in \cite{curtright2009evolution,curtright2011renormalization} to determine the continuous behavior of iterates from the lattice of time points. Similar methods are used in other
applications such as the ones in \cite{heard1975change,keller1956determination,miller1969wkb}.

System identification can be defined as the problem of estimating the best possible model of a system, given a set of experimental data, see \cite{pachter2000identification,peifer2007parameter}. The problem studied here can be related  to the identification problem, in the sense that if the field is parametrized then the inverse problem reduces to determine the parameters from the solution at some times $t_i$, $i=1,\dots,n$. The latter has several applications in physics, chemistry, and biology. In this paper, we present an alternative method for identification of the propagation function in an ordinary differential equation by using the conjugacy method. 

\subsection{The conjugacy method}

Below we present      a brief introduction to the conjugacy method and
how it is related to the problem of  field recovery in an
ordinary differential equation.

Consider an evolution trajectory $x(t)$ of a 1-dimensional system specified by a local, time-
translation-invariant law given by the ordinary differential equation (ODE)
\begin{equation}
dx/dt = v(x).
\label{eq1}
\end{equation}
Equation \eqref{eq1} can be integrated to produce the trajectory as a family of functions of the initial data
indexed by the time
\begin{equation}
x(t)=f_t(x(0)),
\label{eq2}
\end{equation}
where $f_t$ is a group of invariant transformations $f_t$ (see \cite{arnold2012geometrical}).

Thus, for a given time $t$ and increment $\Delta t$ (here we take for easy notation $\Delta t=1$, but the same ideas can be applied  for any $\Delta t$), we have
\begin{equation}
x(1)=f_1(x(0)),
\label{eq3}
\end{equation}
For the case of any time $t$, we have

{
\begin{equation}
x(t + 1) = f_{t+1}(x(0)) = f_1(x(t)) = D(x(t)),
\label{eq4}
\end{equation}}
i.e., $x(t + 1)$ is the same function of $x(t)$ as $x(1)$ is of $x(0)$. 
{Here, $D = f_1$ denotes the unit-time discrete propagation (or iteration) function associated with the ODE in Eq. \eqref{eq1}. For notational convenience, we shall often use $x(0) \cong x$.}

Using the above framework we can state the following problems.

\textit{Problem 1: How does one obtain the complete continuous trajectory $x(t)=f_t(x)$ knowing the discrete propagation function $D(x) = f_1(x)$ in  \eqref{eq4}?}

\textit{Problem 1} was solved in \cite{curtright2009evolution} for certain kind of analytical functions $D$ under some hypotheses covering  physical problems of great interest. Additionally, notice that if this problem is solved we can obtain the corresponding field $v(x)$ of the ODE \eqref{eq1} by the formula
\begin{equation}
v(x)=\frac{\partial }{\partial t} f_t(x)|_{t=0}.
\label{eq5}
\end{equation}

In summary, the method described in \cite{curtright2009evolution} to solve  \textit{Problem 1} consists of the following: Given the differentiable function $D: [0,a] \rightarrow [0,b]$, that represents the discrete evolution function in \eqref{eq4}, we can find a diffeomorphism $h$, which satisfies the conjugation property (Schr\"{o}der's equation)
\begin{equation}
h(D(x))=s\,h(x),\label{sch}
\end{equation}
for some constant $s \neq 1$. The existence, uniqueness and regularity of the solutions of this equation were widely addressed in the literature \cite{kucs1,kucs2}. This equation has been used in some applications, e.g., \cite{curtright2009evolution, heard1975change}. It has the following property: when the origin is a fixed point of $D$ (i.e., $D(0)=0$), it follows that $h(0) = 0$ and, if $h'(0) \neq 0$ then $s = D'(0)$.
It is easy to calculate the integer iterates of $D$: $h(D^n(x))=s^nh(x)$ and $D^n(x)=h^{-1}(s^n h(x))$, where $D^n(x)=D(D^{n-1}(x))$ represents the iterates of $D$.

The iteration property of equation \eqref{sch} can be extended to any real $t$ by considering
\begin{equation}
x(t)=f_t(x(0))=D^t(x(0))=h^{-1}(s^t h(x(0))),\label{contia1}
\end{equation}
where $D^t$ is an analytic function. Thus, if we have the function $h$, the iterates  of the function $D$ can be obtained for any real $t$. This method introduces a change of variables leading to a better representation of the trajectories in the neighborhood of the fixed point $x=0$.

The velocity of the flux in \eqref{contia1} can be obtained as 
\begin{equation}
v(x(t))=d D^t(x)/ dt = \log(s) h(x(t))/h'(x(t)).
\label{eqref1}
\end{equation}
Schematically, the method to obtain the field $v$ from $f_1$ can be represented as
\begin{equation}
\{D\} \rightarrow h \rightarrow v
\end{equation}

The method described above  was used in \cite{curtright2009evolution}  for different cases in which  the field can be obtained analytically in closed form.

Using solution of \textit{Problem 1}, we can solve the following problem.

\textit{Problem 2: 
	How does one obtain the field $v(x)$ from a solution of the ODE \eqref{eq1} given at  discrete  times $t_i$, $i=1,\dots,N$, i.e.,  given $x(t_i)$ for $i=1,\dots,N$?}

To solve  \textit{Problem 2}, we use the solution of \textit{Problem 1}. To do so, the iterate function $D$ in  \eqref{eq3} is obtained from a solution of the ODE \eqref{eq1} at discrete times $t_i$, i.e., from the data $x(t_i)$, with $i=1,\dots,N$.
This strategy is based on the fact that for times $t_i$ uniformly spaced (i.e., $t_{i+1}-t_i=\Delta t$), we have $x(t_{i+1})=D(x(t_i))$, therefore function $D$ can be extended for all  $x$ knowing its values at points $x(t_i)$.
Insomuch as if we have the continuous trajectory $x(t)$ we can obtain the field $v(x)$ by the formula \eqref{eqref1}.
Schematically, this strategy to obtain the field from a given data can be represented as
\begin{equation}\label{eq:strat_1}
\{x(t_i), i=1,\cdots,N\} \rightarrow \{D\} \rightarrow h \rightarrow v.
\end{equation}

Denoting $g(x)= h(x)/h'(x)$, equation \eqref{eqref1} can be rewritten as
\begin{equation}
v(x(t))=d D^t(x)/ dt = \log(s) g(x(t)).
\label{eqref1a}
\end{equation}
Moreover, one can verify that the function $g$ satisfies  Julia's equation
\begin{equation}
g(D(x))=D'(x)g(x),
\label{jul1}
\end{equation}	
with the condition $g^\prime(0)=1$.
Thus, alternatively, obtaining the function $g$  from equation \eqref{jul1}, without relying on the computation of $h$ from equation \eqref{sch}, leads to another strategy  to recover the field $v$, which can be represented schematically as
\begin{equation} \label{eq:strat_2}
\{x(t_i), i=1,\cdots,N\} \rightarrow \{D\} \rightarrow g \rightarrow v.
\end{equation}

This paper focuses on the application of this second strategy.

\subsection{Organization of the paper}

This paper is organized as follows.

In Section \ref{sec:exist}, we discuss the  existence  and uniqueness of the solution of the  Schr\"{o}der's and Julia's equations for a certain class of functions $D$.
The proof is straightforward and it includes the basics elements to approximate the solution of the functional equation in the general case. In Section \ref{secU}, we  provide  some properties of the solutions of Julia's equation.
Section \ref{sec2} presents  the sensitivity stability of two fields for close initial functions $D_1$ and $D_2$ in \eqref{eq3}.

In Section \ref{sec1}, we describe several approximate methods for the solution of \textit{Problem 2}.  These methods are based on a combination of numerical techniques and analytical results.  
In Section \ref{sec:examples} we present several numerical examples that use the proposed approximate methods to recover the field $v(x)$, and illustrate their main characteristics. 

Finally, section \ref{rec:final_r} is dedicated to present the final  remarks.

{We suggest that readers who are more interested in the approximate methods and the numerical examples could skip the technicalities given in Sections \ref{sec:exist}--\ref{sec2} and continue reading from Section \ref{sec1}.
}

\section{Existence and uniqueness of solutions} 
\label{sec:exist}

We reduce the inverse problem of recovering the field of the ordinary differential equation to obtain the solution through
either Schr\"{o}der's or Julia's equation. The construction of the function $D$ in \eqref{eq3} from the data $x(t_i)$, with $
i=1,\dots,N$ is provided in Section \ref{sec:recover_D}. The error in this calculation corresponds to the interpolation error involved in the procedure.

In this section we prove some theoretical results of the existence  and uniqueness of the solution for Schr\"{o}der's (equation \eqref{sch}) and Julia's equations (equation \eqref{jul1}) given the function $D$ in \eqref{eq4}.

We assume that the function $D$ has an attractive  fixed point at  $x=0$ and $|D(x)| < |x|$ in a neighborhood of $x=0$. These assumptions are not as restrictive because the equation can often be transformed, e.g., by substituting $y=D^{-1}(x)$ (see \cite{small2007functional}). The same results can be adapted for any fixed point different from zero or more general if $D$ has several fixed points.
	
	 We use here results from \cite{alva13,kucs2,small2007functional} for the solution of \eqref{sch} and \eqref{jul1};
however, we also obtain other properties that will be used to prove the stability results. 

\begin{mydef}
	A local diffeomorphism $h:(-a,a)\to(-c,d)$, with $a,c,d>0$, such that $D(x)=h^{-1}(\lambda h(x))$, $\forall x \in(-a,a)$ is called a {\bf conjugation} between function $D$ and its linear part $L(x)=\lambda x$, where $\lambda$ is a positive parameter.
	\label{def:1}
\end{mydef}

\amf{The} next proposition proves the existence of a conjugation diffeomorphism for \amf{a} certain class of functions $D$, which proves the existence and uniqueness of
the solution of Schr\"{o}der's equation.

\begin{prop}\label{prop:1} 
	Assume $a>0$ and $D:(-a,a)\to\mathbb{R}$, $D\in C^{1+\epsilon}$ for some $\epsilon>0$ 
	such that (a) $D(0)=0$, (b) $0<\lambda:=D'(0)<1$ and (c) $|D(x)|<|x|$, 
	$\forall x \in(-a,a)\backslash\{0\}$. 
	Then there is a unique conjugation $h$ as in Definition~\ref{def:1}, such that $h'(0)=1$. 
	\amf{This} diffeomorphism $h$ is of
	class $C^{1+\epsilon}$ and its derivative is given by 
	\begin{equation}
	h'(x) = \prod_{i=0}^{\infty}\frac{D'(D^i(x))}{\lambda}, 
	\label{eq:hlinhahj}
	\end{equation}
	where $D^0(x)=x$ and $D^{i+1}(x)=D(D^i(x))$, $\forall i\geq 0$. 
\end{prop}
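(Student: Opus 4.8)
The plan is to use the classical linearization (Koenigs) construction, phrased so that the asserted derivative formula \eqref{eq:hlinhahj} serves as the \emph{definition} of $h'$ rather than being extracted afterwards. So I would define $H(x):=\prod_{i\ge0}D'(D^{i}(x))/\lambda$, set $h(x):=\int_{0}^{x}H(t)\,dt$, and then verify in turn that $H$ is well defined and $C^{\epsilon}$, that $h$ is a $C^{1+\epsilon}$ local diffeomorphism with $h'(0)=1$, that $h$ solves \eqref{sch}, and that it is the unique such conjugation.

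First I would describe the dynamics near the fixed point. Since $D'(0)=\lambda\in(0,1)$ and $D'$ is continuous, I fix $\mu\in(\lambda,1)$ and choose $b\in(0,a)$ with $|D'(y)|\le\mu$ for $|y|\le b$; then $D$ maps $[-b,b]$ into itself and $|D^{i}(y)|\le\mu^{i}|y|$ on $[-b,b]$. For an arbitrary $x\in(-a,a)$, hypothesis (c) makes $|D^{n}(x)|$ strictly decreasing, and a standard subsequence-plus-continuity argument (any limit point $p$ of the orbit would satisfy $|D(p)|=|p|$, forcing $p=0$) shows $D^{n}(x)\to0$. Hence every orbit enters $[-b,b]$ after finitely many steps, and because the sets $\{x:|D^{j}(x)|<b\}$ are open and cover $(-a,a)$, this entry time is bounded by some $N_{0}=N_{0}(K)$ uniformly over any compact $K\subset(-a,a)$; consequently $\sup_{x\in K}|D^{i}(x)|\le C_{K}\mu^{\,i}$ for $i\ge N_{0}$, and also $|D^{i}(x)-D^{i}(y)|\le\Lambda_{K}\mu^{\,i}|x-y|$ for $i\ge N_{0}$ and $x,y\in K$.

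Next I would build $h$. From $D\in C^{1+\epsilon}$ one has $|D'(u)-D'(0)|\le C|u|^{\epsilon}$ locally, so $\bigl|D'(D^{i}(x))/\lambda-1\bigr|\le(C/\lambda)|D^{i}(x)|^{\epsilon}$; combined with the geometric decay above this makes $\sum_i\bigl|D'(D^i(x))/\lambda-1\bigr|$ converge uniformly on compacts, so $H$ is well defined, continuous, positive near $0$, with $H(0)=1$. Estimating $\log H$ by a geometric series and using the Lipschitz-with-factor-$\mu^{i}$ control of $D^{i}$ inside $[-b,b]$ gives $|\log H(x)-\log H(y)|\le C''|x-y|^{\epsilon}$ on each compact, hence $H\in C^{\epsilon}_{\mathrm{loc}}$. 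Setting $h(x)=\int_0^x H$ yields $h(0)=0$, $h'=H\in C^{\epsilon}$ (so $h\in C^{1+\epsilon}$), $h'(0)=1$, and $h'>0$, so $h$ is a diffeomorphism onto an interval $(-c,d)$ as in Definition~\ref{def:1} (this uses that $D'$ does not vanish along orbits, which is automatic near $0$ and holds throughout $(-a,a)$ in the natural monotone setting). The functional equation is then formal: the index shift $D'\!\bigl(D^{i}(D(x))\bigr)=D'\!\bigl(D^{i+1}(x)\bigr)$ gives $H(D(x))\,D'(x)=\lambda H(x)$, whence $(h\circ D)'=h'(D)\,D'=\lambda h'=(\lambda h)'$, and since $h(D(0))=0=\lambda h(0)$ on the connected domain we conclude $h(D(x))=\lambda h(x)$, i.e. $D=h^{-1}\circ(\lambda\,\mathrm{id})\circ h$. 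For uniqueness, if $\tilde h$ is another conjugation with $\tilde h'(0)=1$, then $\phi:=\tilde h\circ h^{-1}$ is a $C^{1}$ map fixing $0$ with $\phi'(0)=1$ and $\phi(\lambda y)=\lambda\phi(y)$; iterating, $\phi(y)=\lambda^{-n}\phi(\lambda^{n}y)\to\phi'(0)\,y=y$ as $n\to\infty$, so $\phi=\mathrm{id}$ and $\tilde h=h$.

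The step I expect to be the real obstacle is the middle one: showing that the infinite product \eqref{eq:hlinhahj} converges \emph{and} is $\epsilon$-Hölder. This is precisely where all three hypotheses are used essentially — $\lambda<1$ to make $D$ a genuine contraction on $[-b,b]$ after a uniform number of steps, (c) to drive every orbit into that interval, and $C^{1+\epsilon}$ to make both $|D'(D^{i}(x))/\lambda-1|$ and the corresponding Hölder increments summable against $\mu^{\,i\epsilon}$. A mild care point is that a naive term-by-term Hölder bound for the $i$-th factor degrades like $(\operatorname{Lip}D)^{i\epsilon}$, so one must exploit that the tail factors are close to $1$ and use the contraction estimate $|D^{i}(x)-D^{i}(y)|\le\Lambda_K\mu^{\,i}|x-y|$ rather than the crude Lipschitz bound. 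Everything else — the conjugation identity, the normalization $h'(0)=1$, and uniqueness — is then routine.
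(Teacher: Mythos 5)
Your proposal is correct and its core is the same Koenigs-type construction the paper uses: define $h'$ by the infinite product \eqref{eq:hlinhahj}, prove convergence and $\epsilon$-H\"older continuity of (the logarithm of) the product using the geometric decay of the iterates $D^i$ together with the $C^\epsilon$ modulus of $D'$, integrate to get $h$, and verify Schr\"oder's equation via the index shift $H(D(x))\,D'(x)=\lambda H(x)$. You differ in two local respects. First, where the paper simply shrinks $a$ (Remark~\ref{rem:1}) so that $0<\bar b<D'<b<1$ holds on the whole interval and every orbit contracts geometrically from step zero, you keep the original interval and use hypothesis (c) plus a compactness argument to obtain a uniform entry time into a contracting neighbourhood of $0$; this is more faithful to the statement as written (which asserts the conjugation on all of $(-a,a)$), at the price of having to assume --- as you note parenthetically --- that $D'$ does not vanish along orbits outside that neighbourhood, an issue the paper's shrinking of $a$ silently removes. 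Second, for uniqueness the paper differentiates $h(D^n(x))=\lambda^n h(x)$ and lets $n\to\infty$ to show that any $C^1$ conjugation with $h'(0)=1$ must have derivative \eqref{eq:hlinhahj}, whereas you pass to $\phi=\tilde h\circ h^{-1}$, which commutes with $y\mapsto\lambda y$ and satisfies $\phi'(0)=1$, and conclude $\phi=\mathrm{id}$; your variant needs only differentiability of $\phi$ at $0$ rather than continuity of $h'$ there, and is equally short. Both routes yield the proposition; I see no gap in yours.
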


\begin{rem}
	\label{rem:1}
	Under the hypotheses of the above proposition, taking smaller $a$, if necessary, there 
	are $\bar b$ and $b$ such that $0<\bar b < D'(x) < b < 1$, $\forall x \in(-a,a)$ 
	and $|D(x)|\leq b|x|<|x|$, 
	$\forall x \in(-a,a)\backslash\{0\}$. 
\end{rem}

\begin{proof} 
	Since the function $h$ satisfies $h(D(x))=\lambda h(x)$ and, for $x=0$, 
	$h(0)=\lambda h(0)$, it follows that $h(0)=0$. Moreover,
	$h(D^n(x)) = \lambda^n h(x)$, $\forall n\in\mathbb{N}$, yielding $h'(D^n(x)) (D^n)'(x)=\lambda^n h'(x)$, 
	and thus 
	\begin{equation}
	h'(x)=
	h'(D^n(x)) \frac{(D^n)'(x)}{\lambda^n}=
	h'(D^n(x)) \prod_{j=0}^{n-1} \frac{D'(D^j(x))}{\lambda}.
	\label{eq:aux3}
	\end{equation}
	Using Remark~\ref{rem:1} we get $|D^n(x)|\leq b^n|x| \to 0$, when $n\to\infty$. 
	Since $h'(0)=1$ and $h\in C^{1}$ it follows that $h'(x)$ satisfies equation \eqref{eq:hlinhahj}. 
	This implies the uniqueness of $h$.
	
	Next we prove that the product in equation 
	(\ref{eq:hlinhahj}) converges. \amf{This} is equivalent to \amf{proving} that 
	\begin{equation}
	f(x) = \sum_{j=0}^{\infty} (\log D'(D^j(x))-\log \lambda)
	\label{eq:aux4}
	\end{equation}
	converges and so the function $f$ is well defined. Since $D'\in C^{\epsilon}$, it follows 
	that $\log D'\in C^{\epsilon}$\amf{,} thus there exists $k>0$ such that 
	$|\log D'(z) - \log D'(y)| \leq k |z-y|^\epsilon$, $\forall y,z\in(-a,a)$. In particular, 
	$|\log D'(z) - \log \lambda| \leq k |z|^\epsilon$, $\forall z\in(-a,a)$.
	
	Since $|D^{j}(x)| \leq b^j |x| \leq a\cdot b^j$, $\forall x \in(-a,a)$, 
	$|\log D'(D^j(x))-\log\lambda| \leq k |D^j(x)|^\epsilon \leq k \cdot a^\epsilon\cdot b^{j\epsilon}$, 
	$\forall j\geq 0$, yielding the absolute and uniform convergence of the
	series in equation (\ref{eq:aux4}).
	Moreover, since $|D^j(x)-D^j(y)|=|(D^j)'(\xi)| |x-y|$, for some $\xi\in (x,y)$, and 
	$|(D^j)'(\xi)|=\prod_{j=0}^{j-1} |D'(D^j(\xi))| \leq b^j$, we have, for every $x, y\in (-a,a)$,
	\begin{align}
	|f(y)-f(x)| 
	& \leq    \sum_{j=0}^{\infty} | \log D'(D^j(y)) - \log D'(D^j(x))| \\
	& \leq  k \sum_{j=0}^{\infty} |D^j(y)-D^j(x)|^\epsilon \\
	& \leq  k \sum_{j=0}^{\infty} b^{j \epsilon} |x-y|^\epsilon = \frac{k}{1-b^\epsilon} |x-y|^\epsilon, 
	\end{align}
	yielding $f\in C^{\epsilon}$. Thus,
	\begin{equation}
	e^{f(x)} = \prod_{j=0}^{\infty} \frac{D'(D^j(x))}{\lambda}
	\end{equation}
	is also of class $C^\epsilon$. Notice that 
	\begin{equation}
	\begin{array}{cl}
	\displaystyle
	e^{f(D(x))} & =  \displaystyle \prod_{j=0}^{\infty} \frac{D'(D^j(D(x)))}{\lambda} 
	= \prod_{j=1}^{\infty} \frac{D'(D^j(x))}{\lambda} 
	= \frac{\lambda}{D'(x)} \prod_{j=0}^{\infty} \frac{D'(D^j(x))}{\lambda} \\
	& =  \displaystyle \frac{\lambda}{D'(x)} e^{f(x)}. 
	\end{array}
	\end{equation}
	Defining
	\begin{equation}
	h(x) = \int_{0}^{x} e^{f(t)} dt, 
	\label{eq:func1}
	\end{equation}
	we have $h(0)=0$, $h'(x)= e^{f(x)}$, and thus
	$
	(h(D(x)))' = h'(D(x)) D'(x) = $ \\ $e^{f(D(x))} D'(x) =
	\lambda e^{f(x)} = \lambda h'(x)$, $\forall x \in (-a,a).
	$
	Since $h(D(0))=h(0)=0=\lambda h(0)$, we have $h(D(x))=\lambda h(x), \forall x \in (-a,a)$.
	Since $h'(x)=e^{f(x)}>0$, $\forall x \in(-a,a)$, $h$ is a 
	diffeomorphism over its image $(-c,d)$. From $h'(x)= e^{f(x)}\in C^\epsilon$ it follows that
	$h\in C^{1+\epsilon}$. Since 
	$h(D(x)) = \lambda h(x)$, we have $D(x) = h^{-1}(\lambda h(x))$, $\forall x \in(-a,a)$. Finally, notice that
	\begin{equation}
	f(0) = \sum_{j=0}^{\infty}(\log(D'(D^j(0))) - \log\lambda) = 0, 
	\end{equation}
	yielding $h'(0) = e^{f(0)}=1$.
\end{proof}

\begin{rem}
	\label{rem:1a}
	If in the hypothesis of the above proposition, we choose $D$ defined on the interval 
	$[0,M]$, the proposition is still valid in the case when function $D$ is 
	differentiable at zero.
\end{rem}

\begin{rem}
	$\bullet$ If $b\neq 0$, the unique conjugation $\tilde h$ of class $C^1$ between $D$ and its linear part with
	$\tilde h'(0)=b$ is defined by $\tilde h(x)=bh(x)$.\\
	$\bullet$ There exists a function $D$ of the class $C^1$, such that the product in 
	equation (\ref{eq:hlinhahj}) does not converge, 
	$\forall x \neq 0$, and a conjugation $h$ of class $C^1$ in the sense of 
	Definition~\ref{def:1} does not exist. 
\end{rem}

\begin{mydef}
	Two functions $f(w)$ and $\tilde f(w)$ are {\bf similar} (denoted by $f(w)\sim\tilde f(w)$) when 
	\begin{equation}
	\lim_{w\to 0}\frac{f(w)}{\tilde f(w)}=1. 
	\end{equation}
\end{mydef}

\begin{rem}
	\label{remer}
	The function $h$ of the Proposition~\ref{prop:1} is such that 
	$\forall n\in\mathbb{N}$, $D^n(x)=h^{-1}(\lambda^n h(x))$. From $h'(0)=1$ it follows
	that $h^{-1}(y)\sim y$, yielding $D^n(x)\sim\lambda^n h(x)$. Thus $h(x)$ can be obtained through the expression 
	\begin{equation}
	h(x)=\lim_{n\to\infty}{\lambda^{-n}D^n(x)}.
	\label{eq:for1}
	\end{equation}
This equation gives a relatively simple method for obtaining the solution of Schr\"{o}der's equation, which is called Koenigs algorithm (see \cite{small2007functional}). Notice that in this algorithm the computation of the derivative of $D$ is not necessary.
\end{rem}




Given the functions $D$ and $h$ as in Proposition~\ref{prop:1} it \amf{may} be checked that the function
$g:=h/h'$ satisfies Julia's equation studied in \cite{kucs2}:
\begin{equation}
g(D(x)) = D'(x)g(x).
\label{eq:func_eq}
\end{equation}
Moreover, $g(0)=0$ and $g$ is differentiable at $0$ with $g^\prime(0)=1$. Indeed, since $h(x)\sim x$ and $h^\prime(x)\sim 1$, it follows that $g(x)\sim x$. Consequently, $g(0)=0$ and $g^\prime(0) = 1$.

\begin{prop}\label{prop:new}
\amf{Assume} that $D$ satisfies the conditions of Proposition~\ref{prop:1} and $\tilde g:(-a,a)\to\mathbb{R}$ is a solution of Julia's equation \eqref{eq:func_eq} \amf{differentiable} at $0$, then it is given by  
\begin{equation}
\tilde g(x) = \tilde{g}'(0) \frac{h(x)}{h'(x)},
\label{eq62}
\end{equation}
where $h$ is the solution of Schr\"{o}der equation.
\end{prop}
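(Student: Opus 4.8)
The plan is to show that any solution $\tilde g$ of Julia's equation \eqref{eq:func_eq} that is differentiable at the origin must be a constant multiple of the particular solution $g=h/h'$ obtained from Proposition~\ref{prop:1}, and then to identify the constant as $\tilde g'(0)$. The engine of the argument is the elementary fact that a function invariant under $D$ and possessing a limit at the attracting fixed point $x=0$ is necessarily constant.

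First I would record the behaviour of $\tilde g$ near $0$. Evaluating \eqref{eq:func_eq} at $x=0$ gives $\tilde g(0)=D'(0)\tilde g(0)=\lambda\tilde g(0)$, and since $0<\lambda<1$ this forces $\tilde g(0)=0$; differentiability at $0$ then yields $\tilde g(x)\sim\tilde g'(0)\,x$ as $x\to 0$. On the other side, by Proposition~\ref{prop:1} and Remark~\ref{remer} one has $h(x)\sim x$ and $h'(x)\to h'(0)=1$, hence $g(x)=h(x)/h'(x)\sim x$; moreover $h$ is a diffeomorphism with $h(0)=0$ and $h'>0$, so $g$ has no zeros on $(-a,a)\setminus\{0\}$, and (as already noted in the excerpt) $g$ itself solves \eqref{eq:func_eq}.

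The core step is to introduce the quotient $\phi(x):=\tilde g(x)/g(x)$, which is well defined and finite on $(-a,a)\setminus\{0\}$. Writing Julia's equation for both $\tilde g$ and $g$ and dividing, the common factor $D'(x)>0$ cancels and we obtain $\phi(D(x))=\phi(x)$; by induction $\phi(D^n(x))=\phi(x)$ for all $n\ge 1$. By Remark~\ref{rem:1} we may shrink $a$ so that $|D(x)|\le b|x|$ with $b<1$, so that $D$ maps $(-a,a)$ into itself and $D^n(x)\to 0$ for every fixed $x$. Combining the two similarity relations above gives $\lim_{y\to 0}\phi(y)=\tilde g'(0)$, so letting $n\to\infty$ in $\phi(x)=\phi(D^n(x))$ yields $\phi(x)=\tilde g'(0)$ for all $x\in(-a,a)\setminus\{0\}$. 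Hence $\tilde g(x)=\tilde g'(0)\,g(x)=\tilde g'(0)\,h(x)/h'(x)$ on $(-a,a)\setminus\{0\}$, and the identity \eqref{eq62} extends to $x=0$ as well, since both sides vanish there.

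I expect the only point requiring care to be the bookkeeping of neighbourhoods: one must ensure that a single interval $(-a,a)$ serves simultaneously for $h$, for $g$ and $\tilde g$, and for the contraction estimate $|D(x)|\le b|x|$ driving $D^n(x)\to 0$ — which is precisely what Remark~\ref{rem:1} supplies. Everything else is routine.
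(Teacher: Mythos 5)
Your argument is correct and is essentially the paper's own proof in different clothing: the paper iterates Julia's equation to obtain $\tilde g(D^n(x))=(D^n)'(x)\,\tilde g(x)$ and then compares asymptotics as $D^n(x)\to 0$ via $D^n(x)\sim\lambda^n h(x)$ and $(D^n)'(x)\sim\lambda^n h'(x)$, which is exactly what your $D$-invariant quotient $\phi=\tilde g/g$ accomplishes after cancelling the factor $D'(x)$. The only differences are cosmetic --- your quotient formulation avoids the $\sim$ notation in the degenerate case $\tilde g'(0)=0$, while the paper's proof additionally verifies the converse (that every scalar multiple of $h/h'$ solves the equation), which the statement as written does not strictly demand.
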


\begin{proof}
First, notice that $\forall n \geq 1$, $\tilde g(D^n(x))=(D^n)'(x)\tilde g(x)$. We prove it  using induction. For $n=1$, this is the initial functional equation. Assuming \amf{that} the equation \amf{is} valid for $n$, we have $\tilde g(D^{n+1}(x)) = \tilde g(D(D^n(x))) = D'(D^n(x)) \tilde g(D^n(x)) = D'(D^n(x))(D^n)'(x) \tilde g(x) = (D\circ D^n)'(x) \tilde g(x) = (D^{n+1})'(x) \tilde g(x)$.

From $\lim_{n\to\infty}{D^n(x)}=0$ it follows that $\tilde g(D^n(x))\sim \tilde g'(0)D^n(x)$. From $D(x) = h^{-1}(\lambda h(x))$ it follows that
$$
D^n(x)=h^{-1}(\lambda^n h(x)) \text{ and }
(D^n)'(x)=(h^{-1})'(\lambda^n h(x)) \lambda^n h'(x) \sim \lambda^n h'(x)
$$
(since $h'(0)=1$), so 
$$\tilde g(D^n(x))\sim \tilde g'(0) D^n(x) = \tilde g'(0) h^{-1}(\lambda^n h(x)) 
\sim \tilde g'(0) \lambda^n h(x),$$ 
and thus
$$\tilde g'(0) \lambda^n h(x) \sim (D^n)'(x) \tilde g(x) \sim \lambda^n h'(x) \tilde g(x).$$
This yields
\begin{equation}
\tilde g(x) = \tilde{g}'(0) \frac{h(x)}{h'(x)}.
\end{equation}

Next, we prove that for all $\tilde b\in\mathbb{R}$, the function $\tilde g(x)=\tilde b g(x) =\tilde b {h(x)}/{h'(x)}$ is a solution of the functional equation (\ref{eq:func_eq}). 
We have 
\begin{equation}
\tilde g(D(x))= \frac{\tilde b h(D(x))}{h'(D(x))}. 
\label{eq:aux1}
\end{equation}
From $h(D(x))=\lambda h(x)$ follows $h'(D(x)) D'(x) = \lambda h'(x)$ and 
\begin{equation}
\frac{h(D(x))}{h'(D(x))} = \frac{\lambda h(x) D'(x)}{\lambda h'(x)} = \frac{h(x)}{h'(x)} D'(x).
\label{eq:aux2}
\end{equation}
Substituting \eqref{eq:aux2} into \eqref{eq:aux1} yields 
$\tilde g(D(x)) = \tilde b h(x) D'(x) / h'(x) =  D'(x) \tilde g(x)$. Thus $\tilde g$ in \eqref{eq62} satisfies the 
functional equation (\ref{eq:func_eq}).
\end{proof}

\subsection{Explicit solution of Julia's equation}
\label{num1}

We assume that function $D$ satisfies the conditions
\begin{equation}
D(0)=0; 0<\lambda:=D'(0)<1; |D(x)|<|x|, \forall x \in(-a,a)\backslash\{0\}. 
\label{def:D}
\end{equation}

Consider the solution $g$ of Julia's equation
\eqref{jul1}, with $g=h/h'$, where $h$ satisfies Schr\"{o}der's equation (for similar results, see \cite{daniel,alva13,kucs2}).
Combining equations \eqref{eq:hlinhahj}, (\ref{eq:for1}), \eqref{eq62} and using that $g'(0)=1$, we obtain 
\begin{equation}
g(x)= \lim_{ n \rightarrow \infty} \frac{D^{n}(x)}{\prod_{j=0}^{n-1} D'(D^{j}(x))}.
\label{eqs:5-17b}
\end{equation}
Formula (\ref{eqs:5-17b}) guarantees that $g(0)=0$ can be rewritten as an 
infinite product, which is very useful for analysis and numerical calculations.
Let us define
\begin{equation}
x_k = D^{k}(x),\;\; k=0,1,\dots,\quad R_n=\frac{x_n}{\prod_{k=0}^{n-1}D'(x_k)}, \quad \rho_n = \rho(x_n),  
\label{eqs:5-20a}
\end{equation}
where $\rho(x) = \frac{D(x)}{D'(x)x}$. The sequence $x_k = D^{k}(x), k=0,1,\dots$ is the so-called splinter of $x=x_0$.
It follows that
\begin{equation}
\label{eqs:5-19}
R_n=\frac{D(x_{n-1})x_{n-1}}{D'(x_{n-1})x_{n-1}
	\prod_{k=0}^{n-2}D'(x_k)}
=\frac{D(x_{n-1})}{D'(x_{n-1})x_{n-1}}R_{n-1}= \rho_{n-1} R_{n-1}
\end{equation}
and \\
\begin{equation}\label{eqs:5-20}
g(x) = x \prod_{n=0}^\infty \rho(D^n(x)). 
\end{equation}

{

This representation of the solution as an infinite product is related to the solution of the functional equation
\begin{equation} \label{eq:g_til}
    {\rho(x)}\hat{g}(D(x)) = \hat{g}(x), 
\end{equation}
that can be obtained from equation \eqref{jul1} by setting $g(x) =  x \hat{g}(x)$ and using the condition $\hat{g}(0)=1$. We notice that applying the fixed-point iteration process
\begin{equation} \label{eq:g_til_fp}
     \hat{g}_{n+1}(x) = {\rho(x)}\hat{g}_n(D(x)), \quad n\geq 0,
\end{equation}
using, as an initial guess $\hat{g}_0$, a continuous function such that $\hat{g}_0(0)=1$ readily leads to formula \eqref{eqs:5-20}.   

\subsection{Solution of Julia's equation in the general case}
\label{sub:sfe}

In this section, we show how to solve Julia's functional equation in the general case. The assumptions used here correspond to the generic situation, in which the iteration function $D(x)$ is obtained from the solution of an ODE with a sufficiently regular field satisfying conditions for the existence and uniqueness of solutions.  

This method was presented in \cite{bedrikovetsky2002porous} for solving an inverse problem
and consists of two stages. First, the problem is subdivided into a finite (or at most countable) number of subproblems
in consecutive bounded intervals covering the total interval that
is the solution domain.
This subdivision is done so that
each subproblem  is easier to solve, since it satisfies an extra inequality 
$D(z) \ne z$.
The method for solving each of these ``reduced" subproblems is presented in Section~\ref{num1}.

Finally, we show how to piece together the solutions of the reduced 
subproblems to obtain the global solution of the functional equation.

We consider that the analysis in Section ~\ref{num1} corresponds to the case when $D(x)<x$, therefore
we set $x_{n+1}=D(x_n)$. In the opposite case, when $D(x)>x$,
we have to set $x_{n+1}=D^{-1}(x_n)$. This result is summarized in the following

\begin{lem} \label{lem:1}
	Let $D:[a,b) \rightarrow [a,\infty), \, 0 \leq a < b$ be a continuous monotone
	increasing function such that $D(a)=a$, possessing a continuous inverse. 
	Assume that $D(z) \ne z$ in $(a,b)$.
	Let $s_0$ be a point in $(a,b)$.
	Define the following sequences:
	
	(i) if $D(z) <z$ in $(a,b)$, let $s_{n+1}=D(s_n)$;
	
	(ii) if $D(z) >z$ in $(a,b)$, let $s_{n+1}=D^{-1}(s_n)$.

	Then the sequence $\{s_n \}$ is monotone decreasing and it converges to $a$.
	
\end{lem}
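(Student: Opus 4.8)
The plan is to analyze the two cases separately, showing in each that the sequence $\{s_n\}$ is well defined, stays in $(a,b)$, is monotone decreasing, and converges to $a$. The key structural fact is that a bounded monotone sequence converges, and that the limit of $s_{n+1} = D(s_n)$ (resp. $D^{-1}(s_n)$) must be a fixed point of $D$ by continuity; since $D(z) \ne z$ on $(a,b)$ and $D(a) = a$, the only available limit in $[a,b)$ is $a$ itself.

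In case (i), where $D(z) < z$ on $(a,b)$, I would first check by induction that $s_n \in (a,b)$ for all $n$: given $s_n \in (a,b)$, monotonicity of $D$ together with $D(a) = a$ gives $s_{n+1} = D(s_n) > D(a) = a$, and $s_{n+1} = D(s_n) < s_n < b$; the latter inequality simultaneously shows the sequence is strictly decreasing and bounded below by $a$. Hence $\{s_n\}$ converges to some $\ell \in [a,b)$. Passing to the limit in $s_{n+1} = D(s_n)$ and using continuity of $D$ yields $\ell = D(\ell)$, so $\ell$ is a fixed point; since the only fixed point of $D$ in $[a,b)$ is $a$, we conclude $\ell = a$.

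In case (ii), where $D(z) > z$ on $(a,b)$, the argument is symmetric but runs through $D^{-1}$. Here one must first note that $D^{-1}$ is defined and continuous on the relevant set (it is the continuous inverse assumed in the hypothesis) and that $D^{-1}$ is also monotone increasing with $D^{-1}(a) = a$. Moreover, $D(z) > z$ on $(a,b)$ translates, after applying $D^{-1}$, into $D^{-1}(w) < w$ for $w$ in the image interval. I would again induct to show $s_n \in (a,b)$ and that $s_{n+1} = D^{-1}(s_n) < s_n$, giving a strictly decreasing sequence bounded below by $a$; convergence to a limit $\ell$ with $D^{-1}(\ell) = \ell$, equivalently $D(\ell) = \ell$, then forces $\ell = a$ as before. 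A small point needing care in this case is ensuring $s_n$ never escapes the domain where $D^{-1}$ is controlled, i.e. that the image of $(a,b)$ under $D$ covers $(a,b)$ so that all iterates stay in $(a,b)$; this follows from $D$ being continuous, increasing, fixing $a$, and satisfying $D(z) > z$.

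The main obstacle is really just the bookkeeping in case (ii): making precise the domains of $D$ and $D^{-1}$ (the codomain is $[a,\infty)$, so one must confirm the iterates do not wander out of $[a,b)$) and correctly transferring the inequality $D(z) > z$ to an inequality for $D^{-1}$. Once the monotonicity-plus-boundedness is established in both cases, the convergence and the identification of the limit with the fixed point $a$ are immediate from the continuity of $D$ and the hypothesis $D(z)\ne z$ on $(a,b)$.
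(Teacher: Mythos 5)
Your argument is correct and follows essentially the same route as the paper's proof: show the iterates form a monotone decreasing sequence bounded below by $a$, pass to the limit using continuity to conclude the limit is a fixed point, and invoke $D(z)\ne z$ on $(a,b)$ to identify the limit as $a$. The paper simply states case (ii) is analogous, whereas you supply the extra bookkeeping for $D^{-1}$; this is a welcome elaboration, not a different method.
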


\begin{proof}
	
	We present the proof for case (i), as the proof for (ii) is analogous.
	
	Let
	$s_{n+1}=D(s_n) <s_n$. Since $s_{n+1}>a$,  $\{s_n\}$ is monotone decreasing and bounded from below,
	so it converges to $\bar{s}$ in $[a,b)$.
	From the continuity of $D$, the equation $s_{n+1}=D(s_n)$  implies that $\bar{s}=D(\bar{s})$,
	so $\bar{s}=a$.
	
\end{proof}

\begin{lem} \label{lem:2}
	Let $D:[a,b] \rightarrow [a,\infty), \, 0 \leq a <b$, be a continuous monotone
	increasing function possessing a continuous inverse, such that $D(a)=a$ and $D(b)=b$, but
	$D(z) \ne z$ in $(a,b)$.
	Let $s_0,r_0$ be any points in $(a,b)$.
	Define the following sequences: 
	
	(i) if $D(z) <z$ in $(a,b)$, $s_{n+1}=D(s_n)$, $r_{n+1}=D^{-1}(r_n)$; 
	
	(ii) if $D(z) >z$ in $(a,b)$, $s_{n+1}=D^{-1}(s_n)$, $r_{n+1}=D(r_n)$.

	Then the sequence $\{ s_n \}$ is monotone decreasing and it converges to $a$,
	and the sequence $\{ r_n \}$ is monotone increasing and it converges to $b$.
	
\end{lem}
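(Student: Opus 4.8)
The plan is to prove case (i) in full and observe that case (ii) is the mirror image, obtained by interchanging the roles of $D$ and $D^{-1}$ (and of the two sequences). Throughout I would use that $D$, being continuous and injective (it has an inverse), is strictly increasing, and that $D(a)=a$, $D(b)=b$ together with the intermediate value theorem force $D([a,b])=[a,b]$; hence $D^{-1}\colon[a,b]\to[a,b]$ is well defined, continuous, strictly increasing, with $D^{-1}(a)=a$, $D^{-1}(b)=b$, so $D^{-1}$ maps $(a,b)$ into $(a,b)$.

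First I would dispose of the sequence $\{s_n\}$. In case (i) we have $D(z)<z$ on $(a,b)$, so restricting $D$ to $[a,b)$ puts us exactly in the hypotheses of Lemma~\ref{lem:1}, and therefore $\{s_n\}$ is monotone decreasing and converges to $a$. The only point to double-check is that the limit is $a$ and not $b$: since $s_{n+1}=D(s_n)<s_n$ we have $s_n\le s_0<b$ for all $n$, so the limit $\bar s$ lies in $[a,b)$, and by continuity $\bar s=D(\bar s)$; as the only fixed points of $D$ in $[a,b]$ are $a$ and $b$, necessarily $\bar s=a$.

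The genuinely new part is the sequence $\{r_n\}$, with $r_{n+1}=D^{-1}(r_n)$. By the remark above, $r_0\in(a,b)$ implies $r_n\in(a,b)$ for all $n$ by induction. For monotonicity, set $w=r_{n+1}=D^{-1}(r_n)\in(a,b)$; then the hypothesis $D(w)<w$ reads $r_n=D(w)<w=r_{n+1}$, so $\{r_n\}$ is strictly increasing. Being bounded above by $b$, it converges to some $\bar r\in(a,b]$; passing to the limit in $r_{n+1}=D^{-1}(r_n)$ and using continuity of $D^{-1}$ gives $\bar r=D^{-1}(\bar r)$, i.e. $D(\bar r)=\bar r$. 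Since $\bar r\ge r_0>a$, the only remaining possibility is $\bar r=b$, which completes case (i).

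For case (ii) the inequalities flip: $D(z)>z$ on $(a,b)$ gives $D^{-1}(z)<z$ there, so now $s_{n+1}=D^{-1}(s_n)<s_n$ decreases to $a$ and $r_{n+1}=D(r_n)>r_n$ increases to $b$, by the same boundedness-plus-continuity and fixed-point argument. I do not expect a real obstacle here; the only things requiring care are (a) establishing that $D^{-1}$ is well defined and continuous on all of $[a,b]$ — this is precisely where the hypotheses $D(b)=b$ and ``$D$ possesses a continuous inverse'' enter — and (b) keeping track of which of the inequalities $D(z)<z$ or $D^{-1}(z)<z$ drives the monotonicity of each particular sequence. Beyond that, the argument is a verbatim repetition of the reasoning already used in Lemma~\ref{lem:1}.
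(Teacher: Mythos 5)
Your proposal is correct and takes essentially the same approach as the paper, which disposes of the lemma in one line by saying it ``follows from repeated applications of Lemma~\ref{lem:1}''; your write-up simply makes those applications explicit (Lemma~\ref{lem:1} verbatim for $\{s_n\}$, and the same monotone-bounded-plus-fixed-point argument, transported through $D^{-1}$, for $\{r_n\}$). In fact your version supplies details the paper omits, in particular the verification that $D(b)=b$ makes $D^{-1}$ a well-defined continuous increasing map of $[a,b]$ onto itself, which is what lets the Lemma~\ref{lem:1} reasoning be reused for the sequence converging to $b$.
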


\begin{proof}
	
	The proof follows from repeated applications of Lemma \ref{lem:1}.
	
\end{proof}

\begin{thm} \label{th:3}
	Let $D:[a,b) \rightarrow [a,\infty), \,  0 \leq a <b$ be a $C^1$ monotone
	increasing function possessing a $C^1$ inverse, such that $D(a)=a$, but
	$D(z) \ne z$ in $(a,b)$. 
	\textup{(} Here $[a,b)$ stands for $[a,b)$ or $[a,\infty)$. 
	\textup{)}
	
	Consider the set $G_a$ of continuous functions in $[a,\infty)$, differentiable at $a$, 
	with $g(a)=a$, $g'(a)=g'_a \ne 0$.
	Furthermore, consider the functional equation in $G_a$: 
	\begin{equation} \label{eq:funcg}
	g \big(D(z)\big)= D'(z)g(z) \quad \text{for any} \quad z \quad\text{in}\quad (a,b).
	\end{equation}
	
	Then the functional equation \eqref{eq:funcg} has a unique solution in $(a,b)$.
	This solution is proportional to $g'_a$.
	
\end{thm}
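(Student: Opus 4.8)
The plan is to reduce the statement, by two elementary changes of variable, to Propositions~\ref{prop:1} and~\ref{prop:new}, and then to read off from the normalization at the endpoint that the solution is a multiple of $g'_a$. \emph{Reduction to the contracting case.} Suppose first $D(z)>z$ on $(a,b)$. The hypotheses are inherited by the $C^1$ inverse $D^{-1}$, which fixes $a$, is monotone increasing, and satisfies $D^{-1}(z)<z$ on $(a,b)$; substituting $z\mapsto D^{-1}(w)$ in $g(D(z))=D'(z)g(z)$ and using $(D^{-1})'(w)=1/D'(D^{-1}(w))$ shows that $g$ solves Julia's equation for $D$ on $(a,b)$ if and only if it solves Julia's equation for $D^{-1}$ on $(a,b)$. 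Hence we may assume $D(z)<z$ on $(a,b)$; then $D$ maps $[a,b)$ into itself, and by Lemma~\ref{lem:1} the splinter $z_n:=D^n(z)$ decreases strictly to $a$ for every $z\in(a,b)$.

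\emph{Existence and proportionality.} Translate the fixed point to the origin by setting $\bar D(y):=D(a+y)-a$; then $\bar D(0)=0$, $\bar D$ is $C^1$ with $\lambda:=\bar D'(0)=D'(a)$, and $|\bar D(y)|<|y|$ on the relevant one-sided interval, which is exactly the situation of Proposition~\ref{prop:1} together with Remark~\ref{rem:1a}. Proposition~\ref{prop:1} produces the Schr\"oder diffeomorphism $h$ of $\bar D$, and Proposition~\ref{prop:new} shows that every solution of Julia's equation for $\bar D$ that is differentiable at $0$ equals $\tilde g'(0)\,h/h'$; translating back, $g(z):=g'_a\,h(z-a)/h'(z-a)$ solves Julia's equation for $D$ on $(a,b)$, belongs to $G_a$ (its continuation to all of $[a,\infty)$ being obtained from the functional equation where that is meaningful and chosen continuous otherwise, which does not affect the restriction to $(a,b)$), and is manifestly proportional to $g'_a$. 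Equivalently one can bypass $h$ and use the telescoping product of Section~\ref{num1}: the partial products $v_n(z):=(D^n(z)-a)/(D^n)'(z)=(z-a)\prod_{k=0}^{n-1}\rho_a(D^k(z))$, with $\rho_a(w):=\dfrac{D(w)-a}{D'(w)(w-a)}$, satisfy the recursion $v_{n+1}(z)\,D'(z)=v_n(D(z))$, so that once $v(z):=\lim_n v_n(z)$ is known to exist it is continuous on $[a,b)$, has $v(a)=0$, $v'(a)=1$, and obeys $v(D(z))=D'(z)v(z)$, whence $g:=g'_a\,v$ is the asserted solution.

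\emph{Uniqueness.} If $g\in G_a$ solves the equation, an induction exactly as in the proof of Proposition~\ref{prop:new} gives $g(D^n(z))=(D^n)'(z)\,g(z)$ for all $n\ge1$ and all $z\in(a,b)$. Since $z_n=D^n(z)\to a$ with $z_n\ne a$ and $g$ is differentiable at $a$ with $g(a)=0$ (the value forced by evaluating the equation at $z=a$), we have $g(z_n)=g'_a(z_n-a)\bigl(1+o(1)\bigr)$; comparing this with $g(z_n)=(D^n)'(z)g(z)$ and using that $g(z)$ is independent of $n$ forces $g(z)=g'_a\lim_n (z_n-a)/(D^n)'(z)$. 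This determines $g$ uniquely on $(a,b)$ and displays once more its proportionality to $g'_a$.

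\emph{Main obstacle.} The one genuinely substantial step is the existence of the limit $\lim_n (D^n(z)-a)/(D^n)'(z)$, equivalently the convergence of the infinite product $\prod_{n\ge0}\rho_a(D^n(z))$. This is precisely the estimate already established in Proposition~\ref{prop:1}: it rests on a H\"older-type control of $\log D'$ in a neighbourhood of $a$ combined with the (eventually geometric) contraction of the splinter toward $a$ provided by Lemma~\ref{lem:1}. The two changes of variable, the induction, and the verification that the limit satisfies the functional equation are then routine.
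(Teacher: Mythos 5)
Your proof follows essentially the same route as the paper, whose entire argument for this theorem is the single sentence that the proof is ``similar to the one in Proposition~\ref{prop:1} for the case that the fixed point is $x=0$''; you have simply made explicit what that reduction entails (translating the fixed point to the origin, passing to $D^{-1}$ when $D(z)>z$, and invoking Proposition~\ref{prop:1} together with Proposition~\ref{prop:new} for existence, the form of the solution, and uniqueness). The one caveat --- inherited from the paper's own statement rather than introduced by you --- is that the theorem assumes only $D\in C^1$ and does not exclude $D'(a)=1$, whereas the convergence of the infinite product that you correctly identify as the main obstacle genuinely requires the $C^{1+\epsilon}$ hypothesis of Proposition~\ref{prop:1} (the paper's own remark after Proposition~\ref{prop:1} notes a $C^1$ counterexample) as well as $0<D'(a)<1$ after the reduction.
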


\begin{proof}
The proof is similar to the one in Proposition \ref{prop:1}  for the case that the fixed point is $x=0$.
\end{proof}

\begin{lem} \label{lem:4}
	
	Let $D:[a,b] \rightarrow [a,\infty), \, 0 \leq a < b$, be a $C^{1+\epsilon}$ monotone
	increasing function possessing a $C^1$ inverse,  such that $D(a)=a$ and $D(b)=b$, but
	$D(z) \ne z$ in $(a,b)$,
	and assume that the functional 
	equation \eqref{eq:funcg} is satisfied in $G_a \cap G_b$. 
	Furthermore, assume that $D'(a) >0$, $D'(b) >0$.
	
	Then $g'(b)$ is uniquely defined and it is proportional to $g'(a)$.
	
\end{lem}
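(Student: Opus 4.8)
The plan is to deduce the statement from the one‑endpoint result Theorem~\ref{th:3}, reducing everything to elementary linear algebra on the (one‑dimensional) solution space of \eqref{eq:funcg}.

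First I would apply Theorem~\ref{th:3} at the fixed point $a$, to the restriction $D:[a,b)\to[a,\infty)$: its hypotheses hold because $D$ is $C^{1}$ (indeed $C^{1+\epsilon}$), monotone increasing with a $C^{1}$ inverse, $D(a)=a$, and $D(z)\neq z$ on $(a,b)$. This yields a distinguished solution $g_{0}$ of \eqref{eq:funcg} on $(a,b)$, normalized by $g_{0}'(a)=1$, such that every solution $g\in G_{a}$ of \eqref{eq:funcg} equals $g'(a)\,g_{0}$ on $(a,b)$. (That $g_{0}$ is defined on the whole interval, and not merely near $a$, uses $D(b)=b$: since $D$ is increasing, $D$ and $D^{-1}$ map $[a,b)$ into itself, so the appropriate orbit of any point of $(a,b)$ converges to $a$, and the functional equation propagates the values of $g_{0}$ from a neighborhood of $a$ — where they are given by the convergent product furnished by Theorem~\ref{th:3}, cf.\ Proposition~\ref{prop:1} and \eqref{eqs:5-20} — to all of $(a,b)$.)

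Now let $g\in G_{a}\cap G_{b}$ be a solution of \eqref{eq:funcg}. By the previous paragraph $g=g'(a)\,g_{0}$ on $(a,b)$, with $g'(a)\neq 0$. Since $g\in G_{b}$, the function $g$ is differentiable at $b$, hence so is $g_{0}=g/g'(a)$, and $g_{0}'(b)=g'(b)/g'(a)$. The quantity $\kappa:=g_{0}'(b)$ depends only on $D$, because $g_{0}$ is unique; therefore $g'(b)=\kappa\,g'(a)$ for every admissible $g$, which is exactly the asserted unique determination of $g'(b)$ together with its proportionality to $g'(a)$. Finally $\kappa\neq 0$, since membership in $G_{b}$ forces $g'(b)\neq 0$ while $g'(a)\neq 0$; alternatively, one obtains $\kappa\neq 0$ intrinsically (with a second use of $D'(b)>0$) by conjugating $D$ with the reflection $\sigma(z)=a+b-z$: then $\hat D=\sigma\circ D\circ\sigma$ fixes $a$ with $\hat D'(a)=D'(b)>0$, the map $\hat g:=-g\circ\sigma$ turns solutions differentiable at $b$ into solutions (for $\hat D$) differentiable at $a$, and Theorem~\ref{th:3} applied to $\hat D$ shows these form a line spanned by a function with derivative $1$ at $b$, on which $g_{0}$ necessarily lies.

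The substantive content is already contained in Theorem~\ref{th:3}; the only delicate point specific to this lemma is that the solution normalized at the attracting endpoint must remain differentiable at the \emph{repelling} endpoint $b$. Here this is guaranteed directly by the hypothesis $g\in G_{b}$ combined with $g=g'(a)g_{0}$; intrinsically, it is the role played by $D\in C^{1+\epsilon}$ and $D'(b)>0$, which are precisely what make the reflected application of Theorem~\ref{th:3} at $b$ legitimate. A more hands‑on alternative would write $g_{0}$ explicitly as an infinite product (the analogue of \eqref{eqs:5-20} centered at $a$) and analyze its derivative as $z\to b^{-}$; the obstacle on that route is the convergence analysis of the product at a repelling fixed point, which is exactly the step that the reflected form of Theorem~\ref{th:3} packages away.
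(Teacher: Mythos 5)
Your argument is correct and rests on the same foundation as the paper's --- Theorem~\ref{th:3} applied at both fixed points --- but it packages the decisive step differently. The paper pins down the proportionality constant explicitly: it takes a common interior point $r_0=s_0\in(a,b)$, iterates forward under $D$ to get $s_n\to a$ and under $D^{-1}$ to get $r_n\to b$ (Lemma~\ref{lem:2}), expresses $g'(a)$ in terms of $g(s_0)$ and $g'(b)$ in terms of $g(r_0)$ via the endpoint-shifted versions of \eqref{eq62} and \eqref{eqs:5-17b}, and then equates $g(s_0)=g(r_0)$ to obtain the explicit product formula \eqref{eqs:5-17c} for the ratio of $g'(a)$ to $g'(b)$. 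You instead argue abstractly: Theorem~\ref{th:3} makes the solution set in $G_a$ a line spanned by the normalized solution $g_0$ with $g_0'(a)=1$; the hypothesis that some solution lies in $G_a\cap G_b$ transfers differentiability at $b$ to $g_0$, so $\kappa:=g_0'(b)$ is intrinsic to $D$ and $g'(b)=\kappa\,g'(a)$ for every admissible $g$, with $\kappa\neq 0$ because membership in $G_b$ forbids $g'(b)=0$ (your reflection argument is a nice intrinsic alternative that mirrors the paper's use of $D^{-1}$-orbits at the repelling endpoint). Your route is cleaner and sidesteps any convergence analysis of the infinite products near $b$, at the cost of not producing the constant in closed form; the paper's route yields the explicit expression \eqref{eqs:5-17c}, which is what one would actually compute with. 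Both are valid, and yours is, if anything, a more fully justified rendering of what the paper only sketches.
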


\begin{proof}
We use Theorem \ref{th:3}. 
Let us prove only the case when $D(x)<x$ in $(a,b)$, since the other one is similar.
Take $r_0=s_0$ as any point in  $(a,b)$.
We compute $D(r_0)$ and $D(s_0)$ with the sequences $s_n \rightarrow a$,
$r_n \rightarrow b$ defined in Lemma~\ref{lem:2}; we relate $g'(a)$ to $g(s_0)$ and $g'(b)$ to $g(r_0)$
 using appropriate versions of formulas \eqref{eq62} and \eqref{eqs:5-17b}
(replacing $0$ by $a$, or $0$ by $b$ for cases (i) and (ii),
respectively).
Equating $g(r_0)=g(s_0)$, we obtain:
\begin{equation} \label{eqs:5-17c}
g'(a)=g(s_0) \bigg( \lim_{n \rightarrow \infty} \prod_{k=0}^{n-1} \frac{D'(s_k)}{s_n} \bigg)
=g'(b) \bigg( \lim_{n \rightarrow \infty} \frac{r_n}{\prod_{k=0}^{n-1} D'(r_k)} \bigg)
 \bigg( \lim_{n \rightarrow \infty} \prod_{k=0}^{\infty} \frac{D'(s_k)}{s_n} \bigg)
\end{equation}

\end{proof}

\begin{lem} \label{lem:5}
	
	Let $D:[0,c) \rightarrow [0,\infty)$ \textup{(}where $[0,c)$  may stand for $[0,\infty)$\textup{)}
	be a $C^1$ monotone increasing function possessing a $C^1$ inverse.
	Assume that $D(0)=0$, and that wherever $D(\bar{z})=\bar{z}$ then $D'(z)>0$,
	$D'(z) \ne 1$;

	Then the finite interval $[a,c)$ can be subdivided into a finite number of subintervals
	where $D(z) \ne z$,
	and the infinite interval into a countable number of subintervals,
	such that in the interior of each
	subinterval the quantity $D(z)-z$ does not change sign.
	
	In two consecutive subintervals separated by a fixed point where $D(\bar{z})=\bar{z}$,
	the values of $D(z)-z$ have opposite signs.
	
\end{lem}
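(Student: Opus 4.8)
The plan is to study the $C^1$ auxiliary function $\phi(z):=D(z)-z$ on $[0,c)$. Its zero set $Z$ is precisely the set of fixed points of $D$, and these are exactly the endpoints at which we want to split the interval; away from $Z$ the function $\phi$ is nonzero, so the assertion about constancy of sign on each subinterval will be immediate. Hence the whole argument reduces to two things: showing that $Z$ is discrete (so the counting statements hold) and showing that $\phi$ genuinely changes sign across each point of $Z$.

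The first --- and essentially only substantive --- step is that every zero of $\phi$ is \emph{simple}: if $D(\bar z)=\bar z$ then, by hypothesis, $\phi'(\bar z)=D'(\bar z)-1\neq 0$. Since $\phi'$ is continuous, $\phi$ is therefore strictly monotone on some open neighbourhood of $\bar z$, so $\bar z$ is the only zero of $\phi$ there; i.e., every fixed point is isolated. As $\phi$ is continuous, $Z$ is closed, so $Z\cap[0,n]$ is a closed subset of a compact interval in which no point is an accumulation point of $Z$; by Bolzano--Weierstrass it is finite for every $n$. Consequently $Z$ is at most countable, and it is finite when the domain is bounded. (If $c<\infty$ and $D$ is assumed only $C^1$ on the half-open interval $[0,c)$, finiteness of $Z$ in $[0,c)$ additionally requires that the fixed points not accumulate at $c$; this is automatic if $D$ extends $C^1$ up to $c$, in which case one applies the previous sentence to $[0,c]$, and I would read the lemma in this way.)

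Now enumerate $Z$ in increasing order, $0=\bar z_0<\bar z_1<\bar z_2<\cdots$ (the term $\bar z_0=0$ appears because $D(0)=0$), and take the subdivision to consist of the open intervals $I_k=(\bar z_k,\bar z_{k+1})$. On each $I_k$ the function $\phi$ is continuous and nowhere zero, hence of constant sign by the intermediate value theorem, which is exactly the claim that $D(z)-z$ does not change sign in the interior of a subinterval. For the alternation across a shared endpoint $\bar z_k$ with $k\geq 1$: by simplicity of the zero, $\phi$ is strictly monotone on a neighbourhood $(\bar z_k-\delta,\bar z_k+\delta)\subset I_{k-1}\cup\{\bar z_k\}\cup I_k$ and vanishes there only at $\bar z_k$, so it is negative on $(\bar z_k-\delta,\bar z_k)$ and positive on $(\bar z_k,\bar z_k+\delta)$, or vice versa. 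Combining this with the constancy of sign on $I_{k-1}$ and on $I_k$ shows that the sign of $D(z)-z$ on $I_{k-1}$ is opposite to its sign on $I_k$.

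I expect the main obstacle to be nothing more than the bookkeeping around finiteness on a finite interval, i.e., ruling out accumulation of fixed points; this is precisely the role of the hypothesis $D'(\bar z)\neq 1$ at fixed points (it excludes, in particular, $D\equiv\mathrm{id}$ on a subinterval and, through the simple-zero property, any interior accumulation of $Z$). Once that is granted, the remainder is a routine combination of the monotonicity of $\phi$ near its simple zeros with the intermediate value theorem, parallel in spirit to the reductions already carried out in Lemmas~\ref{lem:1}--\ref{lem:2}.
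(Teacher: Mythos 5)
Your proof is correct and complete; note that the paper states Lemma~\ref{lem:5} \emph{without} any proof at all, so there is no argument of the authors' to compare against, and your reduction to the auxiliary function $\phi(z)=D(z)-z$ (simple zeros at fixed points because $D'(\bar z)\neq 1$, hence isolated zeros, hence a closed discrete zero set; constancy of sign on each complementary interval by the intermediate value theorem; sign alternation across each zero by strict local monotonicity of $\phi$) is exactly the natural way to supply the missing argument. Your parenthetical caveat is also well taken and in fact points to a genuine imprecision in the lemma as literally stated: on a bounded half-open domain $[0,c)$ the hypotheses do not by themselves exclude infinitely many fixed points accumulating at $c$ --- for instance a map that near $c$ behaves like $z+\epsilon\,(c-z)^3\sin\bigl(1/(c-z)\bigr)$ has $D'>0$ and $D'(\bar z)=1\pm\epsilon/(k\pi)\neq 1$ at each of its fixed points $\bar z_k=c-1/(k\pi)$, yet has infinitely many of them in $[0,c)$ --- so the claimed \emph{finiteness} of the subdivision of a finite interval needs the extra reading you propose (e.g.\ that $D$ extends $C^1$ to the closed interval, with the non-degeneracy condition holding at $c$ as well). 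With that reading made explicit, nothing is missing from your argument.
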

}

\section{Properties of the solutions}
\label{secU}

\subsection{Regularity}

It is useful to study the regularity of the class of functions $g$ depending on the regularity of functions $D$. {
The construction $D\in C^{1+\epsilon}$ for some $\epsilon>0$  determines all continuous solutions $g$ admitting derivatives in} $0$ (there are other solutions that are 
only continuous). If $h$ is of class $C^k$, then $h'$ is of class $C^{k-1}$ and thus $g$ 
is of class $C^{k-1}$. Since $h\in C^{1}$, it is easy to prove (by induction in $s$) from 
the expression $g(x) = g'(0) h(x)/h'(x)$ that if $g\neq 0$ and $g\in 
C^{s}$, then $h\in C^{s+1}$. 

\begin{prop} \label{prop:2} \amf{Let} the functions $D$ and $h$ be as in Proposition~\ref{prop:1}\amf{,} with  $h$ defined by \eqref{eq:for1}. Then the function $h\in C^{k}$ with $k\geq 2$, if and only if $D\in C^{k}$.  
	\label{prop:5.2}
\end{prop}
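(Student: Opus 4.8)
The plan is to prove the two implications separately; the implication ``$h\in C^k\Rightarrow D\in C^k$'' is immediate, while the converse requires an inductive decay estimate for the derivatives of the iterates of $D$. For the first implication, recall from Proposition~\ref{prop:1} that $h$ is a diffeomorphism of $(-a,a)$ onto $(-c,d)$ with $h'(x)=e^{f(x)}>0$ for every $x$; hence, if $h\in C^k$, the inverse function theorem gives $h^{-1}\in C^k$ on $(-c,d)$. Since Schr\"{o}der's equation yields $D(x)=h^{-1}(\lambda h(x))$ on $(-a,a)$, the function $D$ is a composition of $C^k$ maps, so $D\in C^k$.

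For the converse, assume $D\in C^k$ with $k\ge 2$ an integer, and write, as in the proof of Proposition~\ref{prop:1}, $h(x)=\int_0^x e^{f(t)}\,dt$ with $f(x)=\sum_{j\ge 0}\bigl(\log D'(D^j(x))-\log\lambda\bigr)$. Because $\exp$ is smooth it is enough to show $f\in C^{k-1}$, for then $h'=e^{f}\in C^{k-1}$ and $h\in C^{k}$. To differentiate this series term by term I first establish, by induction on $r$, a decay estimate for the iterates $\phi_i:=D^i$: for $1\le r\le k-1$ there are constants $C_r>0$ and $\delta_r\in(0,1)$ with $\|\phi_i^{(r)}\|_\infty\le C_r\,\delta_r^{\,i}$ for all $i$ (sup norms over $(-a,a)$; together with $|\phi_i(x)|\le a\,b^i$). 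The key is Remark~\ref{rem:1}: differentiating $\phi_{i+1}=D\circ\phi_i$ via the Fa\`a di Bruno / Leibniz rule gives $\phi_{i+1}^{(r)}=(D'\circ\phi_i)\,\phi_i^{(r)}+E_{i,r}$, where $E_{i,r}$ is a finite sum of terms $(D^{(m)}\circ\phi_i)\,\phi_i^{(s_1)}\cdots\phi_i^{(s_m)}$ with $m\ge 2$, $s_l\ge 1$, $\sum_l s_l=r$; since $|D'\circ\phi_i|<b<1$, while each $D^{(m)}$ is bounded on $[-a,a]$ and, by the inductive hypothesis, every term of $E_{i,r}$ carries at least two factors $\phi_i^{(s_l)}$ with $s_l<r$ and hence is $O(\delta^{2i})$ for a suitable $\delta<1$, the recursion $\|\phi_{i+1}^{(r)}\|_\infty\le b\,\|\phi_i^{(r)}\|_\infty+O(\delta^{2i})$ forces $\|\phi_i^{(r)}\|_\infty$ to decay geometrically (the base case $r=1$ being just $|\phi_i'(x)|=\prod_{j=0}^{i-1}|D'(\phi_j(x))|<b^i$).

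Granting this, I apply Fa\`a di Bruno once more, now to the general term $u_i(x):=\log D'(\phi_i(x))-\log\lambda$ of the series for $f$: since $D\in C^k$ and $D'>0$ we have $\log D'\in C^{k-1}$, and for $0\le r\le k-1$ the derivative $u_i^{(r)}$ is a finite sum of terms $(\log D')^{(m)}(\phi_i(x))\,\phi_i^{(s_1)}(x)\cdots\phi_i^{(s_m)}(x)$ with $m\ge 1$, $s_l\ge 1$, $\sum_l s_l=r$; each $(\log D')^{(m)}$ is bounded on $[-a,a]$ and each $\phi_i^{(s_l)}$ is bounded by $C_{s_l}\delta^{\,i}$, so $\|u_i^{(r)}\|_\infty\le C\,\delta^{\,i}$ (the case $r=0$ being exactly the H\"older estimate already used in Proposition~\ref{prop:1}). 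Hence $\sum_i u_i^{(r)}$ converges uniformly for every $0\le r\le k-1$, so $f\in C^{k-1}$ and therefore $h\in C^k$; the same argument works for non-integer $k$ upon replacing the sup-norms by $C^{k-\lfloor k\rfloor}$-H\"older seminorms. The one genuinely delicate step is the inductive bound $\|\phi_i^{(r)}\|_\infty\le C_r\delta_r^{\,i}$: one must arrange the Fa\`a di Bruno bookkeeping so that each remainder term is visibly carrying at least two ``small'' factors, and keep careful track of which order of differentiability of $D$ is invoked at each stage.
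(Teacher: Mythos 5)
Your proof is correct, and the forward implication coincides with the paper's. For the converse, both arguments reduce the problem to showing that $f=\log h'$ given by \eqref{eq:aux4} is of class $C^{k-1}$, by differentiating the series term by term and proving uniform convergence of the differentiated series inductively in the order of differentiation; the difference is in how the geometric decay of the general term is produced. The paper substitutes $D^j(x)=h^{-1}(\lambda^j h(x))$, so that $(D^j)'(x)=(h^{-1})'(\lambda^j h(x))\,\lambda^j\, h'(x)$ carries an explicit factor $\lambda^j$, and its induction hypothesis is the regularity of $h$ itself at the previous order: the $(r-1)$-st derivative of the general term is written as $\lambda^j P_r(\cdot)/(D'(D^j(x)))^{r-1}$ with $P_r$ a polynomial in derivatives of $D$, $h$ and $h^{-1}$, all bounded by that hypothesis. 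You instead prove a self-contained decay estimate $\|(D^i)^{(r)}\|_\infty\le C_r\delta_r^{\,i}$ for the iterates, via Fa\`a di Bruno applied to $D^{i+1}=D\circ D^i$ together with the contraction $|D'|<b<1$ from Remark~\ref{rem:1}, never invoking $h$ or $h^{-1}$ in the estimates. Your route costs more combinatorial bookkeeping but decouples the estimates on the iterates from the object being constructed, avoiding the slightly self-referential flavour of the paper's induction (where bounds on $h^{(s)}$ and $(h^{-1})^{(s)}$ feed the proof that $h$ is smoother); the paper's route gets the decay factor $\lambda^j$ essentially for free from the conjugacy. Two minor points: in solving the recursion $\|\phi_{i+1}^{(r)}\|_\infty\le b\,\|\phi_i^{(r)}\|_\infty+O(\delta^{2i})$ one should allow $\delta_r$ strictly larger than $\max(b,\delta^2)$ to absorb the $i\,\mu^{i}$ term that appears when $b=\delta^{2}$ (your $r$-dependent constants accommodate this); and the closing remark about non-integer $k$ is unsubstantiated and not needed, since the statement concerns integer $k\ge 2$.
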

\begin{proof} 
	If $h\in C^k$, then $D(x)=h^{-1}(\lambda h(x))$ is a composition of functions of class $C^k$ and thus $D\in C^k$.
	
	\amf{Conversely}, if $D\in C^k$ with $k\geq 2$, we may assume, by reducing the 
	interval \amf{radius} $a$ if necessary, that $D^{(j)}$ 
	is bounded in $(-a, a)$ for $1\le j\le k$. We have $\log (h'(x))=f(x)$ defined by 
	equation (\ref{eq:aux4}) and $h\in C^k$, if and only if, $\log h'\in C^{k-1}$, if and only 
	if, $h''/h' = (\log h')'\in C^{k-2}$. We have
	\begin{equation}
	f'(x) = \sum_{j=0}^{\infty} (\log D'(D^j(x)) - \log \lambda)' = 
	\sum_{j=0}^{\infty} \frac{D''(D^j(x))}{D'(D^j(x))} (D^j)'(x), 
	\end{equation}
	where this series of continuous functions converges uniformly in $(-a,a)$, 
	because $D^j(x)=h^{-1}(\lambda^j h(x))$, and so 
	$(D^j)'(x)=(h^{-1})'(\lambda^j h(x))\cdot \lambda^j \cdot h'(x)$, thus
	\begin{equation}
	\sum_{j=0}^{\infty} \frac{D''(D^j(x))}{D'(D^j(x))} 
	(D^j)'(x)=\sum_{j=0}^{\infty} 
	\frac{D''(D^j(x))}{D'(D^j(x))}\cdot \lambda^j \cdot (h^{-1})'(\lambda^j 
	h(x))\cdot h'(x).
	\end{equation} 
	Since $D''(D^j(x))$, $1/(D'(D^j(x)))$, $(h^{-1})'(\lambda^j h(x))$ and $h'(x)$ 
	are continuous functions which are uniformly bounded in $(-a,a)$, 
	and the series $\sum_{j=0}^{\infty}\lambda^j$ converges absolutely, it follows
	that $h\in C^{2}$. 
	
	We will show by induction on $r$, for $2\leq r \leq k$ that $h\in C^r$. Indeed, we prove that 
	$\sum_{j=0}^{\infty} (\log D'(D^j(x))-\log\lambda)^{(r-1)}$ can be written as
	\begin{equation}
	\sum_{j=0}^{\infty} \frac{\lambda^j P_r (h_1(x),h_2(x)
		,h_3(x),\lambda^j)}{(D'(D^j(x)))^{r-1}}
	\label{eq:aux5}
	\end{equation}
	where $P_r$ is a polynomial in $3r-1$ variables (which depends on $r$) with 
	\begin{equation*}
	h_1(x)=(D^{(s)} (D^j(x)))_{1\leq s\leq r},~~ h_2(x)=(h^{(s)}(x))_{1\leq s\leq r-1},~h_3(x)=((h^{-1})^{(s)}(\lambda^j h(x)))_{1\leq s\leq r-1}.
	\end{equation*}
	For example, the initial case $r=2$ follows from equation \eqref{eq:aux5}: we may take 
	\begin{equation*}
	P_2(u_1,u_2,v_1,w_1,z)=u_2\cdot v_1\cdot w_1
	\end{equation*}
	so we have 
	\begin{equation*}
	P_2(D'(D^j(x)), D''(D^j(x)), h'(x), (h^{-1})'(\lambda^j h(x)), \lambda^j)=D''(D^j(x))\cdot (h^{-1})'(\lambda^j h(x))\cdot h'(x).
	\end{equation*}
	By the induction hypothesis according to which $h\in C^{k-1}$, the functions $h_1(x)$, $h_2(x)$, $h_3(x)$ are continuous and uniformly bounded in $(-a,a)$.
	It follows that the series in equation (\ref{eq:aux5}) converges uniformly to 
	$(\log h')^{(r-1)}$, which is a continuous function (since it is given by a series of continuous
	functions which converges uniformly). The claim follows by induction using the following formula\amf{s:}
	\begin{align}
	& (D^{(s)}(D^j(x)))' = D^{(s+1)}(D^j(x))\cdot (D^j)'(x) = \nonumber \\
	& \;\;\;\; = D^{(s+1)}(D^j(x))\cdot (h^{-1})'(\lambda^j h(x))\cdot \lambda^j\cdot h'(x),\\
	& (h^{(s)}(x))'=h^{(s+1)}(x),\\
	& ((h^{-1})^{(s)}(\lambda^j h(x)))' = (h^{-1})^{(s+1)}(\lambda^j h(x))\cdot \lambda^j\cdot h'(x), \\
	& (\lambda^j)'=0
	\end{align}
	and
	\begin{align}
	& (D'(D^j(x))^{r-1})' =\\
	& = (r-1) D'(D^j(x))^{r-2}\cdot D''(D^j(x))\cdot (D^j)'(x) \\
	& = (r-1) D'(D^j(x))^{r-2}\cdot D''(D^j(x))\cdot (h^{-1})'(\lambda^j h(x))\cdot \lambda^j\cdot h'(x). 
	\end{align}
\end{proof}

\begin{rem}
	From Proposition~\ref{prop:5.2}, it follows that $g\in C^{1}$, if and only if, $D\in 
	C^{2}$. More generally, 
	$g\in C^{k}$, $k\geq 1$, if and only if, $D\in C^{k+1}$. It is possible to prove that 
	$g\not\equiv 0$ is 
	\amf{differentiable} at $0$, if and only if, $D''(0)$ \amf{exists}  .
	\label{rem10}
\end{rem}

\begin{rem}
	If $D$ is a real analytic function, then $h$ is also a real analytic function since $D$ 
	can be extended analytically to some disk $B\subset\mathbb{C}$ with center at the origin, 
	where 
	\begin{equation}
	\prod_{j=1}^{\infty} \frac{D'(D^j(x))}{\lambda}
	\end{equation}
	is the limit of a sequence of analytic functions in $B$ which converges uniformly. In this case, if $g$ \amf{is differentiable} at $0$ then $g$ is a real analytic function. Analyticity is important for the investigation of stability properties of the function $g$ depending on the function $D$, see \cite{alva13}.
\end{rem}

\subsection{Sufficient conditions for monotonicity}

Next, we present sufficient conditions that ensure the monotonicity of the solution of the functional equation (\ref{eq:func_eq}). To this end, for this we first introduce some important assumptions on the behaviour of function $D$.  

\begin{assumption}
	\label{ass:10} We assume that 
	$D(x)$ defined in (\ref{eq4}) is a $C^2$
	function for $0 \le x \le a$  satisfying
	\begin{equation} \label{eq:exist}
	 D(x)<x,\; 0<D^{\prime}(x)< d \; \text{\;for\;}\; 0
	\le x \le a;~~ D(0)=0 ~~\text{and}~~ D^{\prime\prime}(0) \ne 0,
	\end{equation}
	where $d < 1$ is a constant.
\end{assumption}
\begin{assumption}
	\label{ass:f1a} We consider that $D(x)$ defined in (\ref{eq4}) is a $C^2$ such that belongs to the class of functions  
	\begin{equation}
	\label{eq:welf1} \mathcal{M}=\{ D  \in C^{2}[0,a] : r_1
	<D(x) \le r_2,\; 0 \le r_3 \le D'(x) \le r_4,\;  D''(x) \le r_5 \},
	\end{equation}
	depending on certain constants $r_1,\cdots,r_5$. 
\end{assumption}

\begin{rem} 
	\label{th:existence}
	It is possible to check that, under  Assumptions $(\ref{ass:10})$,~$(\ref{ass:f1a})$, the
	solutions $g$ of the functional equation $(\ref{eq:func_eq})$ are
	uniformly bounded.  
\end{rem}

\begin{lem}
	\label{lem:finamono1} Suppose that function $D$  satisfies Assumptions \ref{ass:10}, \ref{ass:f1a} above and
	\begin{equation}
	\label{eq:monot}
	((D'(x))^{2}x-D(x)(D'(x)x))' > 0
	\end{equation}
	holds for all $x \in (0,a]$.
	Then the solution  $g$ of the functional equation (\ref{eq:func_eq}) is monotone increasing in $(0,a]$.
\end{lem}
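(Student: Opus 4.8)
The plan is to read hypothesis \eqref{eq:monot} as the statement that the factor $\rho(x)=D(x)/(xD'(x))$ appearing in the product formula \eqref{eqs:5-20} for $g$ has positive derivative on $(0,a]$, and then to transfer this monotonicity of $\rho$ to $g$ by logarithmic differentiation of the truncated products.

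First I would collect the elementary consequences of Assumption~\ref{ass:10}: since $D(0)=0$, $D'>0$ and $D(x)<x$ on $[0,a]$, the map $D$ is strictly increasing with $0<D(x)<x$ for $x\in(0,a]$, so the iterates $x_n:=D^n(x)$ stay in $(0,a]$ and $x_n\le d^{n}x\to 0$. Moreover $\rho(x)=\frac{D(x)/x}{D'(x)}\to\frac{D'(0)}{D'(0)}=1$ as $x\to 0$, so $\rho$ extends continuously to $[0,a]$ with $\rho(0)=1$, and $\rho(x)>0$ on $(0,a]$ because $D(x),x,D'(x)>0$ there.

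The crux is the identity relating \eqref{eq:monot} to $\rho'$. Writing $W(x):=x\,(D'(x))^{2}-D(x)\,(xD'(x))'$ for the function inside the outer derivative in \eqref{eq:monot}, a direct computation gives
\[
\rho'(x)=\frac{W(x)}{x^{2}\,(D'(x))^{2}},\qquad x\in(0,a].
\]
By \eqref{eq:monot}, $W$ is strictly increasing on $(0,a]$; since $W$ is continuous and $W(0)=0$ (as $D(0)=0$), it is strictly positive on $(0,a]$, hence $\rho'(x)>0$ there. Thus $\rho$ is strictly increasing on $[0,a]$ and $\rho(x)>\rho(0)=1$ on $(0,a]$, so $\rho(x_n)>1$ for all $n$ and therefore $g(x)=x\prod_{n\ge0}\rho(x_n)\ge x>0$ for every $x\in(0,a]$.

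Finally I would transfer monotonicity to $g$. For $N\ge 1$ put $\Pi_N(x):=x\prod_{n=0}^{N-1}\rho(D^n(x))$, which is of class $C^1$ and strictly positive on $(0,a]$, with $\Pi_N(x)\to g(x)$ by \eqref{eqs:5-20}. Logarithmic differentiation gives, for $x\in(0,a]$,
\[
(\log\Pi_N)'(x)=\frac1x+\sum_{n=0}^{N-1}\frac{\rho'(D^n(x))}{\rho(D^n(x))}\,(D^n)'(x)\ \ge\ \frac1x,
\]
since $(D^n)'(x)=\prod_{i=0}^{n-1}D'(D^i(x))>0$, $\rho(D^n(x))>0$ and $\rho'(D^n(x))>0$ (note $D^n(x)\in(0,a]$). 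Integrating $(\log\Pi_N)'\ge 1/x$ over $[x,y]\subset(0,a]$ yields $\Pi_N(y)/\Pi_N(x)\ge y/x$, and letting $N\to\infty$, using $g(x)>0$, gives $g(y)\ge (y/x)\,g(x)>g(x)$ whenever $0<x<y\le a$. Hence $g$ is strictly monotone increasing on $(0,a]$. The only step with genuine content is recognizing \eqref{eq:monot} as the positivity of $\rho'$; the remaining work — justifying term-by-term differentiation of the finite products $\Pi_N$ and the limit passage — is routine.
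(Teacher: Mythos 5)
Your proof is correct and follows the same core route as the paper's: both arguments reduce hypothesis \eqref{eq:monot} to the positivity of $\rho'$ for $\rho(x)=D(x)/(xD'(x))$ and then transfer that monotonicity to $g$ through the product representation \eqref{eqs:5-20}. Two minor divergences are worth noting. First, the paper reads the left-hand side of \eqref{eq:monot} directly as the numerator $(D'(x))^2x-D(x)(D'(x)x)'$ of $\rho'$ (the outermost prime in the displayed inequality is evidently a misprint), whereas you read it as the derivative of that numerator $W$ and recover $W>0$ from $W(0)=0$; both parsings deliver $\rho'>0$ on $(0,a]$, though yours implicitly requires one more derivative of $D$ than Assumption \ref{ass:10} provides. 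Second, for the final step the paper compares $g(x)/g(y)=(x/y)\prod_j\rho(x_j)/\rho(y_j)$ termwise using $x_j<y_j$ and the monotonicity of $\rho$, while you log-differentiate the truncated products and integrate; your version has the small advantage of making explicit the positivity $g(x)\ge x>0$ (needed to pass from a ratio bound to $g(x)<g(y)$), which the paper leaves implicit, and it yields the slightly stronger conclusion $g(y)/g(x)\ge y/x$.
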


\begin{proof}
	We set
	\begin{equation}
	\label{eq:fina1a}
	\rho(x)=D(x)/D'(x)x.
	\end{equation}
	Consider $y,x \in(0,a]$ \amf{with} $x < y$. Let $g$ be the solution of 
	$(\ref{eq:func_eq})$.
	Notice that from (\ref{eqs:5-20}) we have
	\begin{equation}
	\label{eq:fina2a}
	g(x)/g(y)=(x/y)\displaystyle  \prod_{j=1}^{\infty} (\rho(x_j)/\rho(y_j)),
	\end{equation}
	where  $x_j$ and $y_j$ represent the splinters of $x$ and $y$ as defined in (\ref{eqs:5-20a}).
	Notice that $\rho'(s)$ is a fraction with numerator equal to the left side of 
	inequality (\ref{eq:monot}) and the denominator equal to $(D'(x)x)^{2}$. 
	Then inequality (\ref{eq:monot}) yields $\rho'(s) > 0$ for all $s \in 
	(0,a]$.
	
	Using   that $x_j < y_j$ (because function $D$ is monotone increasing) and 
	that the function
	$\rho$ in \eqref{eq:fina1a} is monotone increasing, we have $\rho(x_j) < \rho(y_j)$ 
	for $j=1,2,\cdots$. As a consequence $g(x) < g(y)$; thus $g$ is monotone increasing in the interval $(0,a]$.
\end{proof}

\subsection{Sufficient conditions for superlinearity}

\amf{We now establish} sufficient conditions \amf{for} the solution $g$ of equation \eqref{eq:func_eq} to satisfy a superlinearity condition
\begin{lem}
	\label{lem:finamono1ab} 
	 Let be $D$ function satisfying Assumptions \ref{ass:10}, \ref{ass:f1a} and \amf{suppose} that
	\begin{equation}
	\label{eq:ass2ab}
	D''(0) < 0. 
	\end{equation} 
	Then the solution $g$ of the functional equation (\ref{eq:func_eq}) \amf{satisfies}
	$g(x) \geq x$ for all $x \in [0,a]$.
\end{lem}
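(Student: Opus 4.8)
The natural route is through the infinite-product representation \eqref{eqs:5-20}. Writing $x_n=D^n(x)$ and $\rho(y)=D(y)/(D'(y)\,y)$, one has $g(x)=x\prod_{n=0}^{\infty}\rho(x_n)$, and each factor is strictly positive, since $D(y),D'(y),y>0$ for $y\in(0,a]$ by Assumption~\ref{ass:10}. Hence $g(x)\ge x$ on $(0,a]$ will follow at once from the pointwise inequality $\rho(y)\ge 1$, i.e. $D(y)\ge y\,D'(y)$, for all $y\in(0,a]$; the case $x=0$ is trivial because $g(0)=0$. So the lemma reduces to establishing $D(y)\ge yD'(y)$ on $(0,a]$.

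To prove this, set $\varphi(y)=D(y)-y\,D'(y)$, so that $\varphi(0)=0$ and, since $D\in C^2$,
\begin{equation}
\varphi'(y)=D'(y)-\bigl(D'(y)+y\,D''(y)\bigr)=-\,y\,D''(y),\qquad\text{hence}\qquad \varphi(y)=-\int_0^{y}t\,D''(t)\,dt.
\end{equation}
In the concave regime singled out by $D''(0)<0$, namely $D''\le 0$ on $[0,a]$, we get $\varphi\ge0$ throughout, i.e. $\rho(y)\ge 1$ on $(0,a]$, and substituting into $g(x)=x\prod_{n\ge0}\rho(x_n)$ gives $g(x)\ge x$ for every $x\in[0,a]$.

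An equivalent argument runs through the Schr\"oder function: by Proposition~\ref{prop:1}, $g=h/h'$ with $h'=e^{f}>0$, $f$ as in \eqref{eq:aux4}, so $h''=h'f'$ and $h$ is concave precisely when $f'(x)=\sum_{j\ge0}\frac{D''(D^j(x))}{D'(D^j(x))}(D^j)'(x)\le0$, which holds once $D''\le0$. A concave $h$ with $h(0)=0$ lies below its tangent line at any $x$; evaluating that tangent at $t=0$ yields $0=h(0)\le h(x)-x\,h'(x)$, i.e. $g(x)=h(x)/h'(x)\ge x$.

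I expect the only real difficulty to be the middle step: passing from $D''(0)<0$ to the sign information $\rho\ge 1$ on the \emph{entire} interval $[0,a]$. Near the fixed point this is automatic (continuity of $D''$ gives $D''\le0$, hence $\varphi\ge0$, on some $[0,\delta]$), and for a concave $D$ it is global; for a general $D$ in the class of Assumption~\ref{ass:f1a} one must either deduce nonnegativity of $-\int_0^y tD''(t)\,dt$ on $[0,a]$ from the constants $r_1,\dots,r_5$, or shrink $a$, or exploit that the splinter $x_0>x_1>\cdots\to0$ (monotone since $D(y)<y$) is eventually confined to $[0,\delta]$, reducing the problem to controlling the remaining finitely many factors $\rho(x_k)$. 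This last point is where the full generality of the statement is most delicate.
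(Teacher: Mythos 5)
Your route is genuinely different from the paper's. You work from the product representation \eqref{eqs:5-20}, $g(x)=x\prod_{n\ge 0}\rho(D^n(x))$, and reduce the claim to the pointwise bound $\rho(y)\ge 1$, i.e.\ $D(y)\ge yD'(y)$, which you verify via $\varphi(y)=-\int_0^y tD''(t)\,dt$ whenever $D''\le 0$ on $[0,a]$; your alternative through concavity of the Schr\"oder function $h$ is an equivalent repackaging of the same idea. The paper instead argues purely locally at the fixed point: it notes that $s(x)=g(x)-x$ satisfies $s(0)=s'(0)=0$ and computes $s''(0)=g''(0)$ from $g=h/h'$, concluding from $D''(0)<0$ that $x=0$ is a local minimum of $s$. (As printed, the paper's formula for $g''(0)$ cannot be right: the product $\prod_{j=0}^{n-1}D'(D^j(0))=\lambda^n$ tends to $0$, and the asserted sign $g''(0)\le 0$ would make $0$ a local \emph{maximum} of $s$, i.e.\ $g\le x$ near $0$; the correct value is $g''(0)=-h''(0)=-D''(0)/\bigl(\lambda(1-\lambda)\bigr)>0$.) What each approach buys: the paper's second-derivative test uses only the stated hypothesis $D''(0)<0$ but, even after the sign is repaired, yields $g(x)\ge x$ only in a neighbourhood of the origin; your factorwise bound delivers the inequality on all of $[0,a]$ but requires $D''\le 0$ on the whole interval rather than just at $0$.

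The ``delicate point'' you flag at the end is therefore real, and it is not resolved by the paper either: neither argument, as it stands, derives $g(x)\ge x$ on all of $[0,a]$ from $D''(0)<0$ alone. Your splinter observation is the right place to look --- all but finitely many $x_k$ lie in a neighbourhood of $0$ where $\rho>1$ --- but those finitely many exceptional factors can still drag the product below $1$ unless they are controlled by the constants of Assumption~\ref{ass:f1a} or by an additional global condition (such as $D''\le 0$ on $[0,a]$, or monotonicity of $\rho$ as in \eqref{eq:monot}). So treat your proof as complete under the strengthened hypothesis $D''\le 0$ on $[0,a]$, and as an honest identification of a gap that the paper's own local argument shares.
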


\begin{proof}
	To prove this inequality, it is sufficient to check that the function 
	$s(x)=g(x)-x$ possesses a local minimum at $x=0$. Since we set $g'(0)=1$ , 
	then $s'(0)=0$. Notice that $s''(0)=g''(0)$ and $g(0)=0$. Using $g=h/h'$ and formulas 
	(\ref{eq:aux4}) and (\ref{eq:func1}) \amf{one may verify that}
	\begin{equation}
	g''(0)=\frac{D''(0)}{D'(0)} \displaystyle \lim_{n \rightarrow \infty} 
	\prod_{j=0}^{n-1} D'(D^{j}(0)).
	\end{equation}
	Since $D'(0)$ is positive and \amf{$D''(0) < 0$ we have} $g''(0)\leq 0$ and 
	the lemma follows.
\end{proof}

\begin{rem}
	It is possible to verify that, if $D''(0)=0$, then Lemma \ref{lem:finamono1ab} \amf{is} still valid
	assuming that \amf{the} first derivative such that $D^{(m)}(0) \neq 0$ is less than zero.
	\label{rem:prop56a}
\end{rem}

\section{Continuous dependence and stability}
\label{sec2}

Continuous dependence of the functional equation solution $g$ on
the given iteration function $D$ was established in
\cite{alva13, kucs1}. However, due to the relevance of this result for this paper, we present here a version of this result adapted to the new statement and another class takes place on the stability. 

We have the following lemma:

\begin{lem}
	\label{axm:dan-ama}	Let be $D_1$ and $D_2$ functions satisfying Assumptions \ref{ass:10} and \ref{ass:f1a}
	with $D_1(a) < D_2(a)$ (without loss of
	generality), there exists a data-independent constant $M$ such that the
	following
	inequalities hold: \\
	(i) $||D_1^{-1}-D_2^{-1}||_{\infty}$ $\le$ $M
	||D_1-D_2||_{\infty}$,
	\\
	(ii) $ |D_1^{'}(D_1^{-1}(s))-D_2^{'}(D_2^{-1}(s))|$  $\le$
	$||D_1^{'}-D_2^{'}||_{\infty}+
	M||D_2^{'}||_{\infty}||D_1-D_2||_{\infty}$, for all $s \in
	[0,D_1(a)]$.
\end{lem}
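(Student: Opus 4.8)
The plan is to obtain both estimates from two structural facts that Assumptions~\ref{ass:10} and~\ref{ass:f1a} guarantee uniformly over the class $\mathcal{M}$: each $D_i$ is a strictly increasing $C^1$ bijection of $[0,a]$ onto $[0,D_i(a)]$ whose derivative is bounded below by a positive constant (which we may take to be $r_3$, assumed strictly positive, cf. Remark~\ref{rem:1}), so that $D_i^{-1}$ is Lipschitz with constant $r_3^{-1}$; and, since $D_2\in C^2$ with $|D_2''|\le r_5$ on $[0,a]$, the derivative $D_2'$ is itself Lipschitz with constant $r_5$. Because $D_1(a)<D_2(a)$, for every $s\in[0,D_1(a)]$ both $D_1^{-1}(s)$ and $D_2^{-1}(s)$ belong to $[0,a]$, so every mean value argument below stays inside this common interval.

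For (i), I would fix $s\in[0,D_1(a)]$, set $u_i=D_i^{-1}(s)\in[0,a]$, and note that $D_1(u_1)=D_2(u_2)=s$ gives
\begin{equation}
D_1(u_1)-D_1(u_2)=D_2(u_2)-D_1(u_2)=-(D_1-D_2)(u_2),
\end{equation}
whence $|D_1(u_1)-D_1(u_2)|\le\|D_1-D_2\|_\infty$. Applying the mean value theorem, $D_1(u_1)-D_1(u_2)=D_1'(\xi)(u_1-u_2)$ with $D_1'(\xi)\ge r_3$, so $|D_1^{-1}(s)-D_2^{-1}(s)|\le r_3^{-1}\|D_1-D_2\|_\infty$; taking the supremum over $s$ proves (i) with $M=r_3^{-1}$.

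For (ii), I would insert the intermediate quantity $D_2'(D_1^{-1}(s))$ and split by the triangle inequality,
\begin{equation}
|D_1'(D_1^{-1}(s))-D_2'(D_2^{-1}(s))|\le \|D_1'-D_2'\|_\infty + |D_2'(D_1^{-1}(s))-D_2'(D_2^{-1}(s))|,
\end{equation}
then apply the mean value theorem to $D_2'$, bound $|D_2''|$ by $r_5$, and use part (i) to get $|D_2'(D_1^{-1}(s))-D_2'(D_2^{-1}(s))|\le r_5 r_3^{-1}\|D_1-D_2\|_\infty$. Since $\|D_2'\|_\infty\ge r_3>0$ within $\mathcal{M}$, this last term is at most $M\|D_2'\|_\infty\|D_1-D_2\|_\infty$ once $M$ is enlarged to $\max\{r_3^{-1},\,r_5 r_3^{-2}\}$, which still depends only on the class constants; this yields (ii).

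The argument is short and uses no new ideas, so the only real work is bookkeeping: making sure both inverse images are taken inside the shared domain $[0,a]$ (which is precisely what the normalization $D_1(a)<D_2(a)$ provides), and checking that the final constant is genuinely data-independent — the latter being exactly where the uniform positive lower bound $r_3$ on the derivatives (equivalently, the constant $\bar b$ of Remark~\ref{rem:1}) enters. If one prefers not to assume $r_3>0$, the same proof goes through with $r_3$ replaced by $\inf_{[0,a]}\min\{D_1',D_2'\}$, at the cost of a data-dependent $M$.
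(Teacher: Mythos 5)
Your proof is correct and follows essentially the same route as the paper's: a Lipschitz bound on $D_1^{-1}$ with constant $1/\inf D_1'$ (the paper obtains it from an integral representation of $(D_1^{-1})'=1/D_1'$, you from the mean value theorem applied to $D_1$ together with the identity $D_1(D_1^{-1}(s))=D_2(D_2^{-1}(s))$) gives (i), and the same triangle-inequality splitting through $D_2'(D_1^{-1}(s))$ plus the mean value theorem on $D_2'$ gives (ii). If anything you are more careful than the paper, which writes the constant as $1/r_1$ where $1/r_3$ is meant and leaves implicit the passage from the $\|D_2''\|_\infty$ produced by the mean value theorem to the $M\|D_2'\|_\infty$ appearing in the statement.
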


\begin{proof}
    (i) We have
\begin{equation}\label{eq:sama2}
D_1^{-1}(x)-D_1^{-1}(y)=
\int_{0}^{1}\frac{\partial }{\partial
	\alpha}(D_1^{-1}(\alpha x+(1-\alpha) y))d\alpha.
\end{equation}
Since $D_1 \in \mathcal{M}$ (see \eqref{eq:welf1}) then
$(D_1^{-1}(x))^{'}=1/D_1^{'}(x)< (1/r_1)$, now using \eqref{eq:sama2} we have
\begin{equation}\label{eq:sama3}
|D_1^{-1}(x)-D_1^{-1}(y)| \le
(1/r_1)|x-y|.
\end{equation}
Now, we assume that $x,y \in [0,D_1(B)] \subset [0,D_2(B)]$.
Fixed $x$, let $y=D_1(D_2^{-1}(x))$; it follows that
$D_1^{-1}(y)=D_2^{-1}(x)$ and $x=D_2(D_2^{-1}(x))$.
From (\ref{eq:sama3}) we have
\begin{eqnarray}\label{eq:sama4}
|D_1^{-1}(\tau)-D_2^{-1}(x)|=|D_1^{-1}(x)-D_1^{-1}(y)|
\le (1/r_1)|x-y| \le \nonumber
\\
(1/r_1) |D_1(D_2^{-1}(x))-D_2(D_2^{-1}(x))|.
\end{eqnarray}
From (\ref{eq:sama4}) we see that (i) holds. To prove (ii), notice that for $\Upsilon_1=D_1^{-1}(s)$ and
$\Upsilon_2=D_2^{-1}(s)$
\begin{eqnarray}\label{eq:sama5a}
|D_1^{'}(\Upsilon_1)-D_2^{'}(\Upsilon_2)| \le
|D_1^{'}(\Upsilon_1)-D_2^{'}(\Upsilon_1)|+
|D_2^{'}(\Upsilon_2)-D_2^{'}(\Upsilon_1)|
\end{eqnarray}
From (\ref{eq:sama5a}) and the mean value theorem, we obtain
\begin{equation}\label{eq:sama6}
|D_1^{'}(D_1^{-1}(s))-D_2^{'}(D_2^{-1}(s))| \le
||D_1^{'}-D_2^{'}||_{\infty}+||D_2^{''}||_{\infty}
|D_1^{-1}(s)-D_2^{-1}(s)|;
\end{equation}
using (i) in (\ref{eq:sama6}), we obtain (ii).
\end{proof}

We consider the functions $D$ defined on $[0,B]$ satisfying the
condition \eqref{eq:exist}. Taking $s=D(x)$, Eq.
(\ref{eq:func_eq}) can be rewritten as
\begin{equation}
\label{eqs:f4} g(s)=D'(D^{-1}(s))g(D^{-1}(s)).
\end{equation}
Now, we verify the validity of the following Lemma:
\begin{lem}
	\label{axm:lema2} Let us denote by $g_1$, $g_2$ the solutions of
	Eq. \eqref{eq:func_eq} 
	with corresponding $D_1$, $D_2$  satisfying assumption \ref{ass:10} and \ref{ass:f1a}
	with $D_1(a) < D_2(a)$ . Then there exist constants
	$v_1$, $v_2$, such that
	\begin{equation}
	\label{eq:eqa2o} \vert \vert g_1-g_2 ||_{\infty}  \le v_1
	||D_1^{'}-D_2^{'}||_{\infty}+
	v_2||D_1-D_2||_{\infty}.
	\end{equation}
\end{lem}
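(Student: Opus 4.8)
The plan is to compare $g_1$ and $g_2$ by iterating the rewritten functional equation \eqref{eqs:f4}, namely $g_k(s) = D_k'(D_k^{-1}(s))\, g_k(D_k^{-1}(s))$, and exploiting the contraction built into the splinter dynamics. First I would fix $s$ in the common domain $[0,D_1(a)]$ and unfold both equations $n$ times, writing $g_k(s)$ as $g_k(D_k^n{}^{-1}\!(s))$ — I mean the $n$-th backward iterate under $D_k^{-1}$ — multiplied by the product $\prod_{j=0}^{n-1} D_k'\big(D_k^{-j-1}(s)\big)$; equivalently, using Remark~\ref{remer} and the Koenigs-type representation \eqref{eqs:5-17b}, express each $g_k$ through the finite products $\prod_{j=0}^{n-1} D_k'(x^{(k)}_j)$ where $x^{(k)}_j$ is the splinter of a fixed base point under $D_k$. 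The idea is that since $D_k'(0) = \lambda_k$ with $0 < \lambda_k < 1$ (Remark~\ref{rem:1} gives a uniform bound $D_k' \le d < 1$), the tails of these products decay geometrically at a rate independent of the data, and the solutions $g_k$ are uniformly bounded (Remark~\ref{th:existence}).

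Next I would estimate the difference term by term. Write $g_1(s) - g_2(s)$ as a telescoping sum over the unfolding index: at each level the discrepancy comes from (a) the difference of the multiplicative factors $D_1'(D_1^{-1}(\cdot)) - D_2'(D_2^{-1}(\cdot))$, which is controlled by Lemma~\ref{axm:dan-ama}(ii), i.e.\ by $\|D_1' - D_2'\|_\infty + M\|D_2'\|_\infty\|D_1 - D_2\|_\infty$; and (b) the difference of the arguments $D_1^{-1}(\cdot) - D_2^{-1}(\cdot)$, controlled by Lemma~\ref{axm:dan-ama}(i), i.e.\ by $M\|D_1 - D_2\|_\infty$, which feeds into the Hölder/Lipschitz modulus of $g_2$ (finite since $g_2 \in C^1$ on the closed interval by Remark~\ref{rem10} and Assumption~\ref{ass:f1a}). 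Each such term carries a factor $\prod_{j<\ell} D_k'(\cdot) \le d^\ell$, so summing the geometric series in $\ell$ produces a finite constant times $\big(\|D_1' - D_2'\|_\infty + \|D_1 - D_2\|_\infty\big)$. Collecting the coefficients of $\|D_1' - D_2'\|_\infty$ and of $\|D_1 - D_2\|_\infty$ separately yields the constants $v_1$ and $v_2$; one checks they depend only on the class constants $r_1,\dots,r_5$, $d$ and $M$, hence are data-independent.

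There is one bookkeeping subtlety I would handle carefully at the start: the two splinters under $D_1$ and $D_2$ from a common base point drift apart, so "comparing term by term" requires first pinning down that both remain in a common interval on which both functional equations hold. Since $D_1(a) < D_2(a)$, the relevant common range is $[0, D_1(a)]$, and monotonicity of each $D_k$ keeps the backward orbits inside $[0,a]$; I would record this at the outset so that all the pointwise bounds above are legitimate.

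The main obstacle will be controlling accumulation of the argument-drift error: a naive bound on $|D_1^{-j}(s) - D_2^{-j}(s)|$ could grow with $j$ before the geometric factor kicks in. The resolution is to bound this drift inductively — $|D_1^{-(j+1)}(s) - D_2^{-(j+1)}(s)| \le \frac{1}{r_1}\,|D_1^{-j}(s) - D_2^{-j}(s)| + \frac{1}{r_1}\|D_1 - D_2\|_\infty$ from \eqref{eq:sama3}–\eqref{eq:sama4} — whence the drift stays $O(\|D_1-D_2\|_\infty)$ uniformly in $j$ only if $1/r_1 < 1$; if that fails one instead pairs the drift directly against the decaying product $\prod D_k'(\cdot) \le d^{\,j}$, so that $d^{\,j} \cdot (\text{drift at level } j)$ is summable. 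Once this uniform control of the propagated argument error is in place, the rest is the routine geometric-series summation sketched above, and the estimate \eqref{eq:eqa2o} follows.
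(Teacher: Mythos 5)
Your ingredients are the right ones (Lemma~\ref{axm:dan-ama}, the uniform bounds on $g$, $g'$, $D'$, and the fact that $\|D'\|_\infty\le d<1$), but the multi-level telescoping architecture you build on them has a genuine gap, and it is exactly the one you flag at the end. The unfolding of \eqref{eqs:f4} is a \emph{backward} iteration: since $D(x)<x$, the map $D^{-1}$ is expanding, with Lipschitz constant $1/r_3\ge 1/d>1$ (note also that $r_1$ in Assumption~\ref{ass:f1a} bounds $D$ itself, not $D'$, so the relevant constant is $1/r_3$, and the favourable case $1/r_3<1$ you allow for can never occur here). Consequently the argument drift at level $j$ grows like $(1/r_3)^j\|D_1-D_2\|_\infty$, while the accompanying product of derivatives is only bounded by $d^j$; the combined factor $(d/r_3)^j$ does not decay because $r_3\le d$, so your proposed rescue (``pair the drift directly against the decaying product'') fails and the telescoped series is not summable against $\|D_1-D_2\|_\infty$. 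A secondary issue: the backward orbit $D_k^{-j}(s)$ leaves the range of $D_k$ after finitely many steps, so ``unfold $n$ times'' is only possible for an $s$-dependent finite $n$, and the terminal term still carries $\|g_1-g_2\|_\infty$.

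The paper sidesteps all of this with a one-step absorption argument: apply \eqref{eqs:f4} exactly once, split $|g_1(s)-g_2(s)|$ into a piece controlled by Lemma~\ref{axm:dan-ama} together with the mean value theorem and the uniform bounds, plus the single remainder $|D_2'(\Upsilon_2)|\,|(g_2-g_1)(\Upsilon_1)|\le d\,\|g_1-g_2\|_\infty$; taking the supremum over $s$ and moving this term to the left yields $(1-d)\|g_1-g_2\|_\infty\le v_1\|D_1'-D_2'\|_\infty+v_2\|D_1-D_2\|_\infty$, which is the whole point of the hypothesis $d<1$. If you want to keep an iterative picture you should iterate \emph{forward} along splinters via \eqref{eqs:5-17b} (forward orbits contract, so the drift stays $O(\|D_1-D_2\|_\infty)$ uniformly in $j$), but then you must control the difference of ratios $D^n(x)/\prod_{j}D'(D^j(x))$ in which numerator and denominator both vanish geometrically --- a different and more delicate estimate that your sketch does not supply.
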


\begin{proof}
     Now, using \eqref{eqs:f4} and the notation
$\Upsilon_1=D_1^{-1}(s)$ and $\Upsilon_2=D_2^{-1}(s)$, we obtain
\begin{eqnarray}
\label{eqs:eqfin2an}
|g_1(s)-g_2(s)|=\vert
D'_1(\Upsilon_1)g_1(\Upsilon_1)-D'_2(\Upsilon_2)g_2(\Upsilon_2)
\vert \le
\nonumber
\\
\vert D'_1(\Upsilon_1)g_1(\Upsilon_1)-D'_2(\Upsilon_2)g_2(\Upsilon_1)\vert
+ \vert
D'_2(\Upsilon_2)(g_2(\Upsilon_2)-g_2(\Upsilon_1))\vert.
\end{eqnarray}
Using the mean value theorem and the definition of $\Upsilon_1$, $\Upsilon_2$, we have
\begin{eqnarray}
\label{eqs:fin2an1s1}
|D'_2(\Upsilon_2)(g_2(\Upsilon_2)-g_2(\Upsilon_1))\vert
\le
||D'_2||_{\infty}||g_2^{'}||_{\infty}||D_2^{-1}-D_1^{-1}||_{\infty}.
\end{eqnarray}
Moreover
\begin{eqnarray}
\label{eqs:eqfin2an1s2}
|D'_1(\Upsilon_1)g_1(\Upsilon_1)-D'_2(\Upsilon_2)g_2(\Upsilon_1)\vert
\le \hspace{85mm}\nonumber
\\
\hspace{10mm}
|g_1(\Upsilon_1)||D'_1(\Upsilon_1)-D'_2(\Upsilon_2)\vert
+|D'_2(\Upsilon_2)||g_2(\Upsilon_1)-g_1(\Upsilon_1)\vert \le
\hspace{15mm}\nonumber
\\
\hspace{5mm}
||g_1||_{\infty}|D'_1(\Upsilon_1)-D'_2(\Upsilon_2)\vert
+||D'_2||_{\infty}|(g_2-g_1)(\Upsilon_1)\vert.
\nonumber
\end{eqnarray}
Since $||D'_2||_{\infty} < d <1$, we have from
(\ref{eqs:eqfin2an}), (\ref{eqs:fin2an1s1}) that
\begin{equation}
\label{eqs:eqfin2an2s6}
(1-d)||g_1-g_2||_{\infty} \le ||D_2^{'}||_{\infty}||g_2^{'}||_{\infty}|
||D_2^{-1}-D_1^{-1}||_{\infty}+
||g_1||_{\infty}|D'_1(\Upsilon_1)-D'_2(\Upsilon_2)\vert
\end{equation}
Finally using Remark \ref{th:existence} and Lemma
 (\ref{axm:dan-ama}) in \eqref{eqs:eqfin2an2s6},
we obtain (\ref{eq:eqa2o}).		

\end{proof}

\section{Approximate methods for ODE field recovering}\label{sec1}

{
In this section we present different approximate methods to solve \textit{Problem 2}, i.e., recovering the field $v(x)$ of the ODE \eqref{eq1}. 

\subsection{Determining the function $D$ from the input data} \label{sec:recover_D}

The first step when applying one of the  strategies \eqref{eq:strat_1} or \eqref{eq:strat_2}, consists in the recovering of the iteration function $D$. With this goal in mind, in this section we discuss how to approximately obtain the function $D$ in \eqref{eq4} from the set of data points $\{(t_i,x_i),\;i=1,\dots,N \}$.  

{
We notice that if the input data corresponds to a solution $x(t)$ of an ODE, then we have that $x_i = x(t_i)$, $i=1,\dots,N$. Consequently, when the times $t_i$, $i=1,\dots,N$, are uniformly spaced  with a fixed stepsize $\Delta t$, then the function $D$ at points $x_i = x(t_i)$, $i=1,\cdots,N-1$ is given as $D(x_i)=x_{i+1}$. In this case, an approximation of function $D$ can be readily obtained by interpolation or a curve fitting procedure with the input data points $\{(x_i, x_{i+1}),\; i=1,\dots,N-1\}$.

More generally, when the discrete time points are not uniformly spaced, we first perform an interpolation in time to obtain an approximate  trajectory $x_{ap}(t)$ on the interval $[t_1,t_N]$. 
Finally, we approximate the function $D$ using interpolation or a curve fitting procedure with the input data $\{(x_i,x_{ap}(t_i+\Delta t)),\;i=1,\cdots,N-1\}$.

It is worth remarking that  both procedures can be readily adapted to the case where there are multiple sets of data available. 

{We remark that several methods for the identification of the iteration function $D$ in a discrete dynamical system are widely known (see, for instance \cite{brunton2016discovering,nelson2005nonlinear,roy2017dynamic,zhang2006discrete}) which could be  applicable here. However, in this paper we do not delve into that direction and use the procedures discussed above. 
}



}%

\subsection{Approximate methods to solve Julia's equation}

In this section we describe  several methods to approximate the solution $g(x)$ of the Julia's equation \eqref{jul1} satisfying the condition $g'(0)=1$. This allows us to reconstruct the ODE field \eqref{eq1} setting $v(x) = \log(D'(0))\,g(x)$.

\subsubsection{Infinite product approximation}
\label{ssec:inf_prof}




This method consists in the implementation of equation \eqref{eqs:5-20} and the algorithm presented below returns an approximation of $g$ at the given point $x=x_0$. We assume that the functions $D$ and $D'$ can be readily computed. 

\begin{algorithm}
	\caption{Implementation of formula \eqref{eqs:5-20} }\label{alg:inf_prod}
	\begin{algorithmic}[1]
		\REQUIRE $x_0$, $\epsilon$, functions $D$ and $D^{\prime}$
		\ENSURE $g(x_0) = q_n$
		\STATE $x_n=x_0$, error=1, lim=1, $q_n=1$ 
		\WHILE{ error $> \epsilon$}  
		\STATE  last=lim
		\STATE  $q_n=q_n  D(x_n)/(x_n D^{\prime}(x_n))$ 
		\STATE  $x_n=D(x_n)$ 
		\STATE   lim=$q_n$
		\STATE   error=$|$lim-last$|$/$|$last$|$
		\ENDWHILE
		\STATE $q_n = x_o q_n$
		\RETURN $q_n$
	\end{algorithmic}
\end{algorithm}

\begin{rem} \label{rem:disadv_inf}
    A disadvantage of this method is that it can only be used if the splinter corresponding to the initial point $x=x_0$ is well defined and converges to an attractive fixed point of $D$. This will not be the case, for instance, if the ODE has solutions that explode in finite time. However,  by implementing the strategy discussed in subsection \ref{sub:sfe} that divides the domain of interest in appropriately chosen subintervals,  this method can be applied under very generic conditions.
\end{rem}
 

\subsubsection{Fixed point approximation}
\label{ssec:fixed_p}

This method consists in the implementation of the fixed-point iteration given by equation \eqref{eq:g_til_fp}. Even though the  infinite product and the fixed-point approximations are equivalent, we introduce a method based on the later approximation that relies on interpolation in order to avoid the direct computation of splinters, which is explicitly used in the former approximation.

\begin{algorithm}
	\caption{Implementation of formula \eqref{eq:g_til_fp} }\label{alg:fixed_p}
	\begin{algorithmic}[1]
		\REQUIRE $x_0,\dots,x_m$, $\epsilon$, functions $D$ and $D^{\prime}$
		\ENSURE $g(x_0) = g_0, \dots, g(x_m) = g_m$
		\STATE  $g_0 = \cdots = g_m = 1$, error=1, lim=1
		\STATE  $q_0 = D(x_0)/(x_0 D^{\prime}(x_0)), \dots, q_m = D(x_m)/(x_m D^{\prime}(x_m))$
		\WHILE{ error $> \epsilon$}  
		\STATE  $gl_0 = g_0, \dots, gl_m = g_m$
		\STATE Compute function $g(x)$ interpolating data : $(x_0,g_0), \dots, (x_m,g_m)$
		\STATE  $g_0= q_0 g(D(x_0)), \dots, g_m= q_m g(D(x_m))$ 
		\STATE   error=max($|gl_j-g_j|$)/max($|gl_j|$)
		\ENDWHILE
		\STATE $g_0 = x_o g_0, \dots, g_m = x_m g_m$
		\RETURN $g_0, \dots, g_m$
	\end{algorithmic}
\end{algorithm}

\begin{rem} \label{rem:fixed_p}
Since this method does not rely on splinter computation, it can be readily used in cases in which the splinter corresponding to a point $x$ in the domain of interest is not defined and there is no need for dividing the domain of interest in subintervals. This represents the great advantage of this method in comparison with the method discussed in Section \ref{ssec:inf_prof}. 

Another useful characteristic of this method is its flexibility regarding the interpolation step (fifth step of algorithm \ref{alg:fixed_p}), which allows the final user to choose an interpolation method based on its own criteria.  
\end{rem}

\subsubsection{Least square approximation}
\label{asyn1a}

This method relies on the assumption that we have a parametrization of the unknown field $v(x)$ of the ODE \eqref{eq1}, i.e., we consider that $v(x) = v_p(x)$ where $p$ represents the parameter vector. The goal is to estimate the parameter vector $p^\ast$ associated with the given data, leading in general to a data fitting problem. Optimization techniques have been extensively used for the estimation of parameters in  ordinary and partial differential equations (see for instance \cite{hofmann2018new, lu2004estimation, muller2004parameter, peifer2007parameter, muller2002fitting}), and the proposed  method also follows this approach. 

More specifically, by taking into account the relationship between the field $v(x)$  and the solution of Julia's equation $g(x)$ in \eqref{eqref1a}, we have the approximation $v(x) \approx v_{p^\ast}(x)$ where $p^\ast$ represents the solution of the following optimization problem. 

Find
\begin{equation}\label{eq:ls_method}
p^\ast\quad\text{minimizes}\quad ||\mathbf{R}(p)||^2 
\end{equation}
subject to the constraints: $v_p(0)=0$, $v'_p(0)=\log(D'(0))$, where
\begin{equation*}
    || \mathbf{R}(p) ||^2 = \sum_{j=1}^N | R_j(p) |^2,\quad R_j(p) = v_p(D(x_j))-D'(x_j)v_p(x_j).
\end{equation*}

\begin{rem}\label{rem:l_square}
The optimization problem \eqref{eq:ls_method} leads to a system of linear equations when the dependence of $v_p$ on the parameters $p$ is linear and whenever this dependence is nonlinear the corresponding optimization problem is also nonlinear.
\end{rem}
}

\section{Numerical experiments} \label{sec:examples}

In this section, we present  numerical examples illustrating the application of the approximate methods discussed in the previous section. In these examples, given the function $D(x)$, we approximately recover the field $v(x)$. Moreover, since the exact solutions for these examples are known, they allow us to verify the robustness,  advantages and drawbacks of the proposed methods.  

\subsection{Example 1: recovering a quadratic field}\label{example1}

For $0<a<1$, we consider the quadratic field $v(x)= \log(a) x (1-x)$.
The corresponding iteration function is given by
\begin{equation} \label{eq:quad_D}
    D(x)=\frac{ax}{1-(1-a)x},
\end{equation}
for $x<x_s=1/(1-a)$. This bound is related to the fact that for $x>1$ the solution of the associated ODE explodes in finite time.

We let $a=0.5$ and generate a synthetic set of data points  with different degree of randomness $\sigma$. We take $\{(x_i,y_i), i=0,\dots,N\}$ where $y_i = D(x_i) + \sigma_i$ and $\sigma_i$ are independent (pseudo)random numbers uniformly distributed in the interval $(-\sigma/2, \sigma/2)$.

From these data we estimate the iteration function $D$ by a curve fitting method, considering the dependence of $D$ on the parameter $a$ given in equation \eqref{eq:quad_D}. The resulting iteration function for the case of $\sigma = 0.5$ is given in Figure \ref{figomega}. Next, we recover the corresponding field $v(x)$ using  Algorithm \ref{alg:inf_prod}.  
{ In  Figure \ref{figomega1} the exact and recovered fields for several values of the parameter, i.e., $\sigma=0.1,0.5,0.9,1.9,2.5$ are presented. Notice that the exact field  
	differs slightly from the approximated field for values of 
	$\sigma$ less than one. Despite increasing the difference between both fields for higher values of  $\sigma$, the exact field is recovered in a stable and accurate way. This behavior suggests that the recovery method works well when used to simulate experimental data, which would be contaminated with errors.}



\begin{figure}[htp]
	\begin{center}
		\includegraphics[scale=0.4]{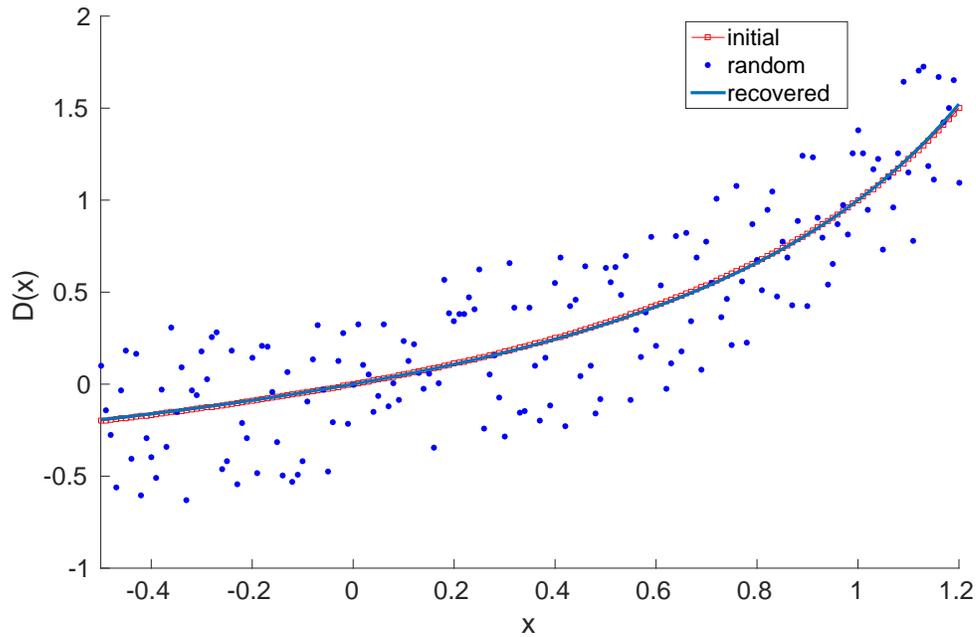}
	\end{center}
	\caption{
	{Iteration function $D$ (solid line), points of the data set with a noise level  $\sigma=0.5$ (blue points) and the recovered iteration function (red circle) as discussed in Example 1 (subsection \ref{example1}). Notice that the exact and recovered iteration functions are basically indistinguishable.}}
	\label{figomega}
\end{figure}


We observe that the field $v(x)$ is recovered beyond the singular point  $x_s=1/(1-a)$, since the estimated iteration function approximates the correctly extended version of the exact iteration function \eqref{eq:quad_D} (which captures a kind of continuation from the infinity of the trajectories that explode in finite time). 

In Table \ref{tab:table1} we show the relative errors   $\epsilon_{D}$, $\epsilon_{D'}$ and $\epsilon_{v}$ corresponding to the approximations of functions $D$, $D^{'}$ and the field $v$, respectively. In the last column we show the values of the stability constant $C_v=\epsilon_{v}/(\epsilon_{D}+\epsilon_{D^{'}})$. The results in this column indicate that the method is robust and stable since  the variables $C_v$ remain uniformly bounded as $\sigma$ is varying in the range $0.1$--$4.5$.

\begin{figure}[htp]
	\begin{center}
		\includegraphics[scale=0.28]{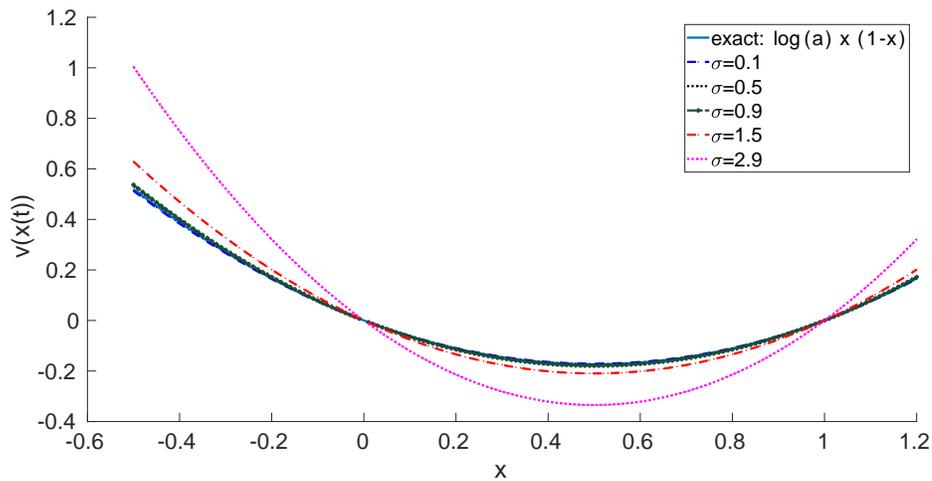}
	\end{center}
	\caption{
	{The exact field  $v(x)$ (solid blue line) and the recovered field corresponding to Example 1 (subsection \ref{example1}), for different values of $\sigma$.
			For $\sigma=0.1$ (dashed blue line), $\sigma=0.5$ (dotted black ), $\sigma=0.9$ (dash-dot green line ), $\sigma=1.5$ (dashed red blue) and $\sigma=2.9$ (dotted magenta line). Notice that for $\sigma<1$  exact and recovered fields are  indistinguishable.}	
	}
	\label{figomega1}
\end{figure}

\begin{table}[htp]
\begin{center}
\caption{Values of the relative errors for the iteration function $D$, its derivative $D'$ and the field $v$, and the stability constant $C_v$ corresponding to example \ref{example1}.}
\label{tab:table1}
 \begin{tabular}{|c | c| c| c| c| c|} 
 \hline
 $\sigma$ & $\epsilon_{D}$ & $\epsilon_{D^{'}}$ & $\epsilon_{v}$ & $C_v$ \\ [0.5ex] 
 \hline
 0.1 & 2.51 & 0.026 & 0.019 & 0.0075 \\ 
 \hline
 0.5 & 2.46 & 0.0883 & 0.06 & 0.026 \\
 \hline
 0.9 & 2.65 & 0.45 & 0.25 & 0.080  \\
 \hline
 1.9 & 2.3 & 0.67 & 0.49 & 0.1661 \\
 \hline
 2.9 & 2.77 & 0.822 & 0.38 & 0.107 \\ 
 \hline
  3.9 & 2.38 & 0.3137 & 0.27 & 0.10 \\ 
  \hline
   4.5 & 2.59 & 0.263 & 0.1614 & 0.05 \\ 
 \hline
 \hline
\end{tabular}
\end{center}
\end{table}

\subsection{Example 2: recovering a cubic field}\label{example2}

For $0<a<1$, we consider the cubic field $v(x)= \log(a) x (1-x^2)$.
The corresponding iteration function is given by
\begin{equation} \label{eq:cubic_D}
    D(x)=\frac{ax}{\sqrt{1+(a^2-1)x^2} },
\end{equation}
for $|x|<x_s=1/\sqrt{1-a^2}$. As in Example 1, this bound is a consequence of finite time blow up of the solutions of the associated ODE, for $|x|>1$. However, in comparison with the previous example, this iteration function cannot be extended in an appropriate way for $|x|\geq x_s$, therefore  a straightforward application of Algorithm \ref{alg:inf_prod} for $|x|>1$ is not possible in this example. 

{
\begin{figure}[htp]
	\begin{center}
		\includegraphics[scale=0.8]{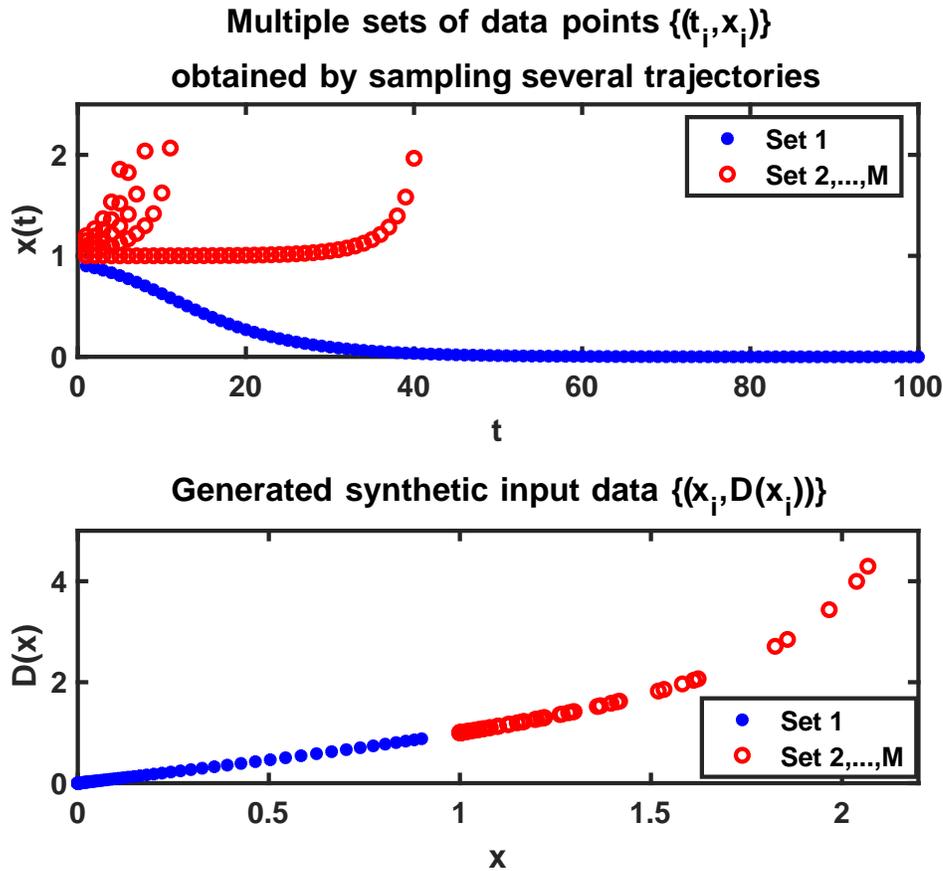}
	\end{center}
	\caption{Multiple sets of data points (upper plot) and the corresponding synthetic data (lower plot) used in Example 2 (subsection \ref{example2}). The set 1 (blue points) is used in both cases (a) and (b), whereas the other sets (red circles) are only used in case (b). }
	\label{fig:ex2_data}
\end{figure}

In the following numerical illustrations, we consider $a=0.9$ and use a synthetic input data set  $\{(x_i,D(x_i)), i=1,\dots,N\}$.  
First, we approximate the iteration function $D$ using the adaptive Antoulas-Anderson (AAA) algorithm for rational approximation introduced in \cite{Nakatsukasa_2018} and afterwards, we recover the field $v(x)$ using Algorithm \ref{alg:fixed_p} by applying the  AAA algorithm in the interpolation step.


\textbf{Case (a):} A set of data points $\{(t_i, x_i)\}$ is obtained by sampling a trajectory lying in the interval $(0,1)$. This set of data points is shown in the upper plot of Figure \ref{fig:ex2_data} as Set 1 (blue points). The synthetic input  data $\{(x_i,D(x_i)), i=1,\dots,N\}$ is generated from this single set of data points, as indicated in subsection \ref{sec:recover_D}. This data is shown in the lower plot of Figure \ref{fig:ex2_data}. 

  In Figure \ref{fig:ex6_a-f1}, we present the graphics corresponding to the approximation of the iteration function as well as its derivative in the interval $(0,1)$. For both functions the absolute error is less than $2.0\times10^{-10}$. The higher errors occur in the neighborhood of $x=1$, where the input data is more sparse.
 
\begin{figure}[htp]
	\begin{center}
		\includegraphics[scale=0.8]{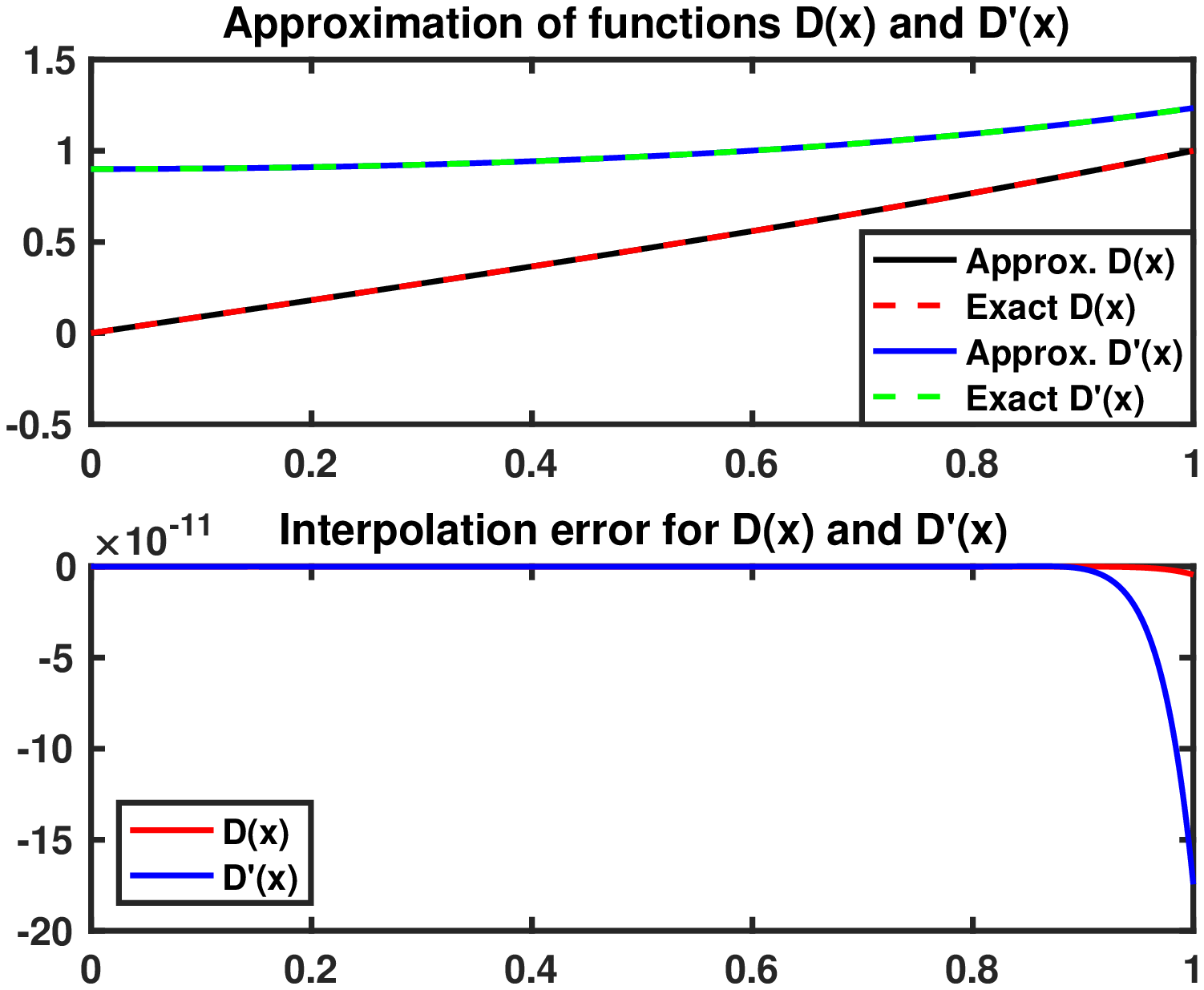}
	\end{center}
	\caption{Exact and approximate iteration function and its derivative (upper plot) and the associated approximation errors (lower plot) in the interval $(0,1)$, corresponding to case (a) of Example 2 (subsection \ref{example2}). Notice that the exact functions and their approximations are  indistinguishable. }
	\label{fig:ex6_a-f1}
\end{figure}

In Figure \ref{fig:ex6_a-f2}, we show the approximated field $v(x)$. The absolute error is less than $2.5\times10^{-6}$, illustrating a very accurate recovering  of the field. As expected the higher errors also occur in the neighborhood of $x=1$. 

Finally, by fitting a third degree polynomial  to the approximate values of the field, we get the following approximation
$$ v(x)\approx 0.10536 \,( x^3 + 2.0352\times10^{-10} \,x^2 - x -1.6465\times10^{-17} ),$$
whose coefficients have an overall absolute error of at least $10^{-6}$.  Consequently, this gives an approximation of $v(x)$ that can be used beyond the interval $(0,1)$.

\begin{figure}[htp]
	\begin{center}
		\includegraphics[scale=0.8]{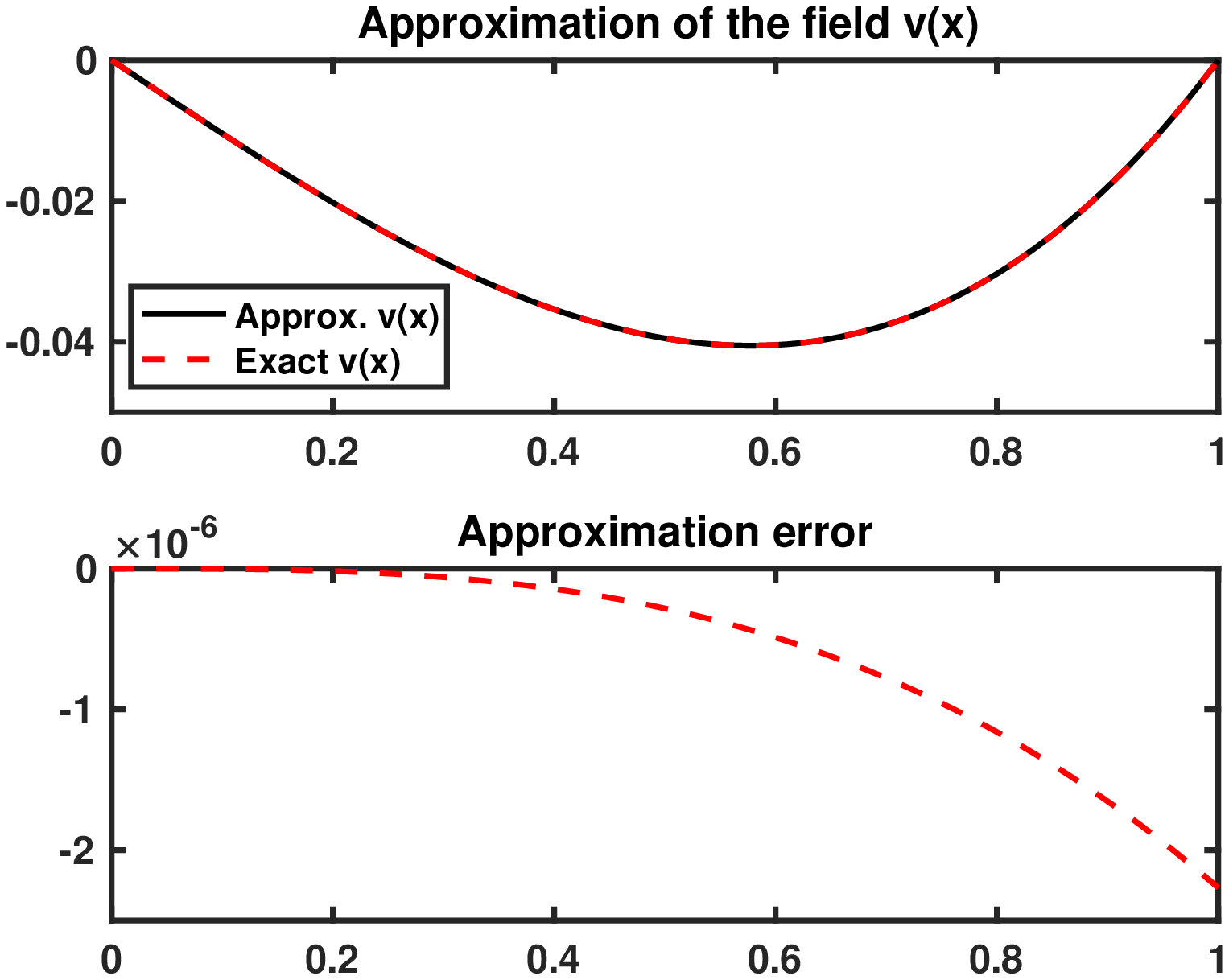}
	\end{center}
	\caption{Exact and approximated field $v(x)$ (upper plot) and  approximation error (lower plot)  in the interval $(0,1)$ corresponding to case (a) of Example 2 (subsection \ref{example2}).  Observe that the exact field and its approximation are indistinguishable. }
	\label{fig:ex6_a-f2}
\end{figure}

\textbf{Case (b):} In order to generate the input data, we proceed as in the previous case but using multiple sets of data points.  We construct these sets by sampling several trajectories lying in  the  interval $(-x_s+0.25, x_s-0.25)\approx (-2.04,2.04)$. They contain all sets of data points shown in the upper plot of Figure \ref{fig:ex2_data} and also those obtained by symmetry with respect to the axis of abscissas. The synthetic input data $\{(x_i,D(x_i)), i=1,\dots,N\}$ is generated from this multiple sets of data points, as indicated in subsection \ref{sec:recover_D}. In the lower plot of Figure \ref{fig:ex2_data} we show half of this synthetic data, since the whole set of data is symmetric with respect to the origin.


In Figure \ref{fig:ex6_12a}, we show the graphics corresponding to the approximation of the iteration function $D(x)$ as well as its derivative in the interval $(-2.04,2.04)$. For both functions the absolute error is less than $6.0\times10^{-11}$; however, the negative impact of the singularities of these functions for $x=\pm x_s\approx \pm 2.29$ is already noticeable in the neighborhood of $x=\pm 2.04$.

\begin{figure}[htp]
	\begin{center}
		\includegraphics[scale=0.8]{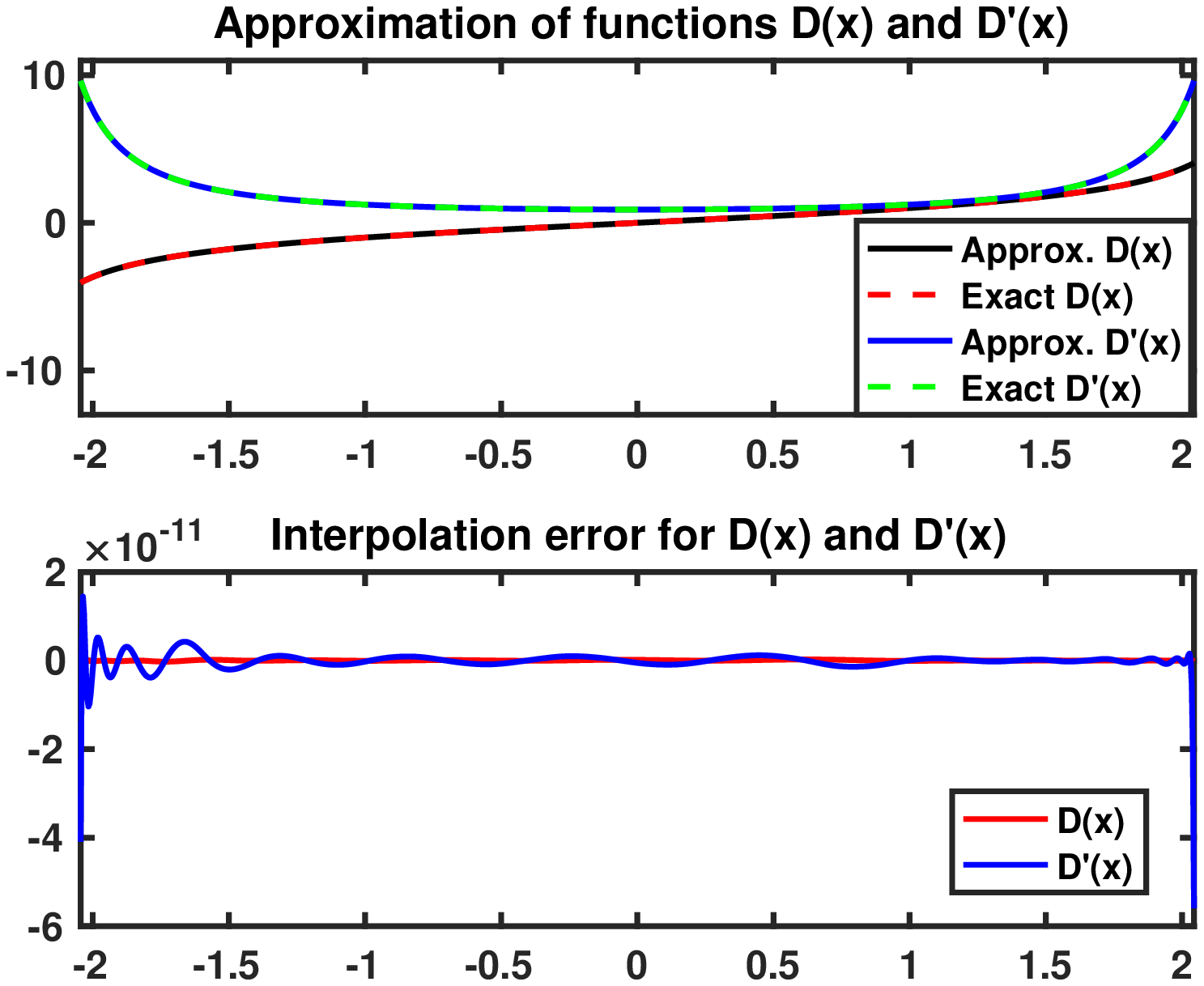}
	\end{center}
	\caption{Exact and approximated iteration function and its derivative (upper plot) and the associated approximation errors (lower plot) corresponding to case (b) of Example 2 (subsection \ref{example2}).  Notice that the exact functions and their approximations are almost indistinguishable. }
	\label{fig:ex6_12a}
\end{figure}

In Figure \ref{fig:ex6_3a}, we show the field $v(x)$ and its approximation. The absolute error is less than $2.0\times10^{-5}$, illustrating a very accurate recovering  of the field. The higher errors also occurs in the neighborhood of $x=\pm 2.04$.

Performing a curve fitting procedure with a third degree polynomial, we get the following approximation
$$ v(x)\approx 0.10536 \,( x^3 - 1.0620\times10^{-10} \,x^2 - x + 8.6391\times10^{-11} ),$$
whose coefficients have an overall absolute error of at least $10^{-6}$. Therefore, this approximation of $v(x)$ can give accurate results beyond the interval $(-x_s,x_s)\approx (-2.29,2.29)$. 



\begin{figure}[htp]
	\begin{center}
		\includegraphics[scale=0.8]{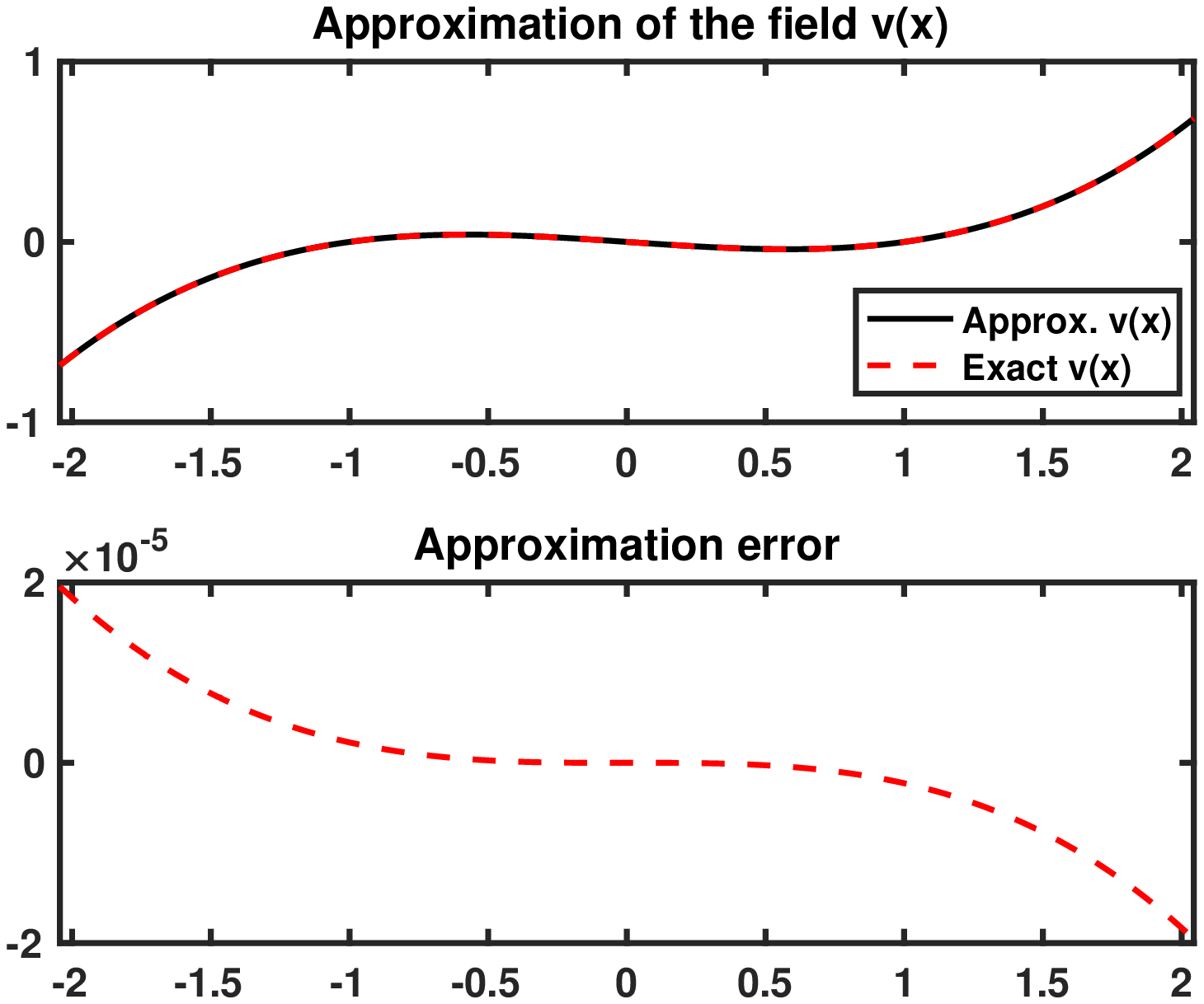}
	\end{center}
	\caption{Exact and approximated field $v(x)$ (upper plot) and  approximation error (lower plot)  in the interval $(-2.04,2.04)$ corresponding to case (b) of Example 2 (subsection \ref{example2}).   Notice that the exact field and its approximation are   indistinguishable. }
	\label{fig:ex6_3a}
\end{figure}
}

As a remark we should note  that the interval where we are able to accurately recover the field $v(x)$ using Algorithm \ref{alg:fixed_p} is a closed subinterval  of $(-x_s,x_s)$. However, we  accurately extrapolated the approximation beyond this interval using a curve fitting procedure by taking into account a parametric dependence of the field.
In general,  if we don't have any additional information, in order to  recover the field in a larger interval, we need to use an iteration function corresponding to a smaller $\Delta t$, i.e. to use a data set obtained with a higher sampling rate of the trajectories. 

\subsection{Example 3: recovering a field with a singular fixed point}\label{example3}

For $0<a<1$, we consider the field $v(x)= \log(a) (x+1/2)\log (2x+1)$ for $x\geq -1/2$. The corresponding iteration function is given by
\begin{equation} \label{eq:sing_D}
    D(x)=\frac{(2x+1)^a-1}{2},
\end{equation}
that besides the regular fixed point $x=0$ also has a singular fixed point at $x_s =-1/2$. Notice that at $x_s$ both functions  $v(x)$ and $D(x)$ are not differentiable.

We let $a=0.5$ and generate a synthetic set of data points $\{(x_i,y_i), i=1,\dots,N\}$ in the interval $(-0.5, 1.2)$  where $y_i = D(x_i)(1 + \sigma_i)$ and $\sigma_i$ are independent (pseudo)random numbers uniformly distributed in the interval $(-\sigma, \sigma)$. As in the previous examples the data $\{(x_i,D(x_i)), i=1,\dots,N\}$ is obtained from a multiple set of data points $\{(t_i,x_i)\}$ following the procedure discussed in subsection \ref{sec:recover_D}. We consider different values of $\sigma$ (up to a $5\%$ perturbation), and obtain a  rational approximation of the iteration function $D(x)$ using the full-Newton least square algorithm presented in \cite{etna_vol35_pp57-68}. Graphics of the approximate iteration function, its derivative and the approximation errors corresponding to the case with $\sigma=0.05$ ($5\%$ perturbation) are shown in Figure \ref{fig:ex2_12}. For the iteration function the absolute error is less than $10^{-2}$. However, the absolute error for the derivative is close to $0.5$, and as expected the worst approximation occurs near the singularity.

\begin{figure}[htp]
	\begin{center}
		\includegraphics[scale=0.55]{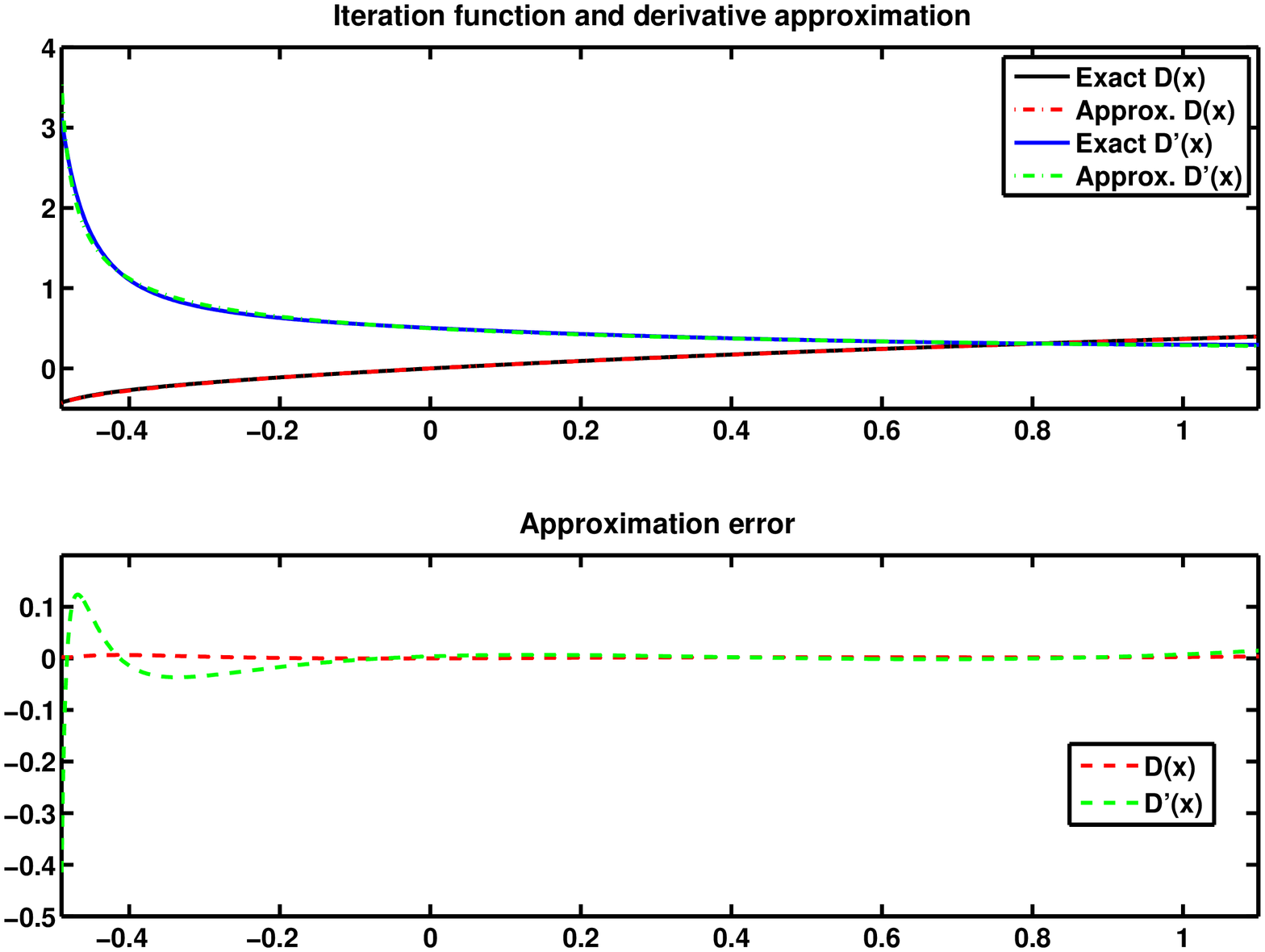}
	\end{center}
	\caption{Exact and approximated iteration function and its derivative (upper plot) and the corresponding approximation errors for the iteration function and its derivative (lower plot) corresponding to Example 3 (subsection \ref{example3}).   }
	\label{fig:ex2_12}
\end{figure}

In Figure \ref{fig:ex2_3}, we show the field $v(x)$ recovered using a least square rational approximation, also based on the algorithm mentioned above.  The absolute error obtained with this method is less than $8.0\times10^{-2}$. The result is not as accurate as in the previous example, but the current example has a different type of singularity that affects the derivatives of the iteration function and the field. Moreover, in the approximation of the field  the higher errors occurs near $x=1.2$, indicating that the effect of the sparseness of the data around this point has a stronger impact than the singularity.

\begin{figure}[htp]
	\begin{center}
		\includegraphics[scale=0.55]{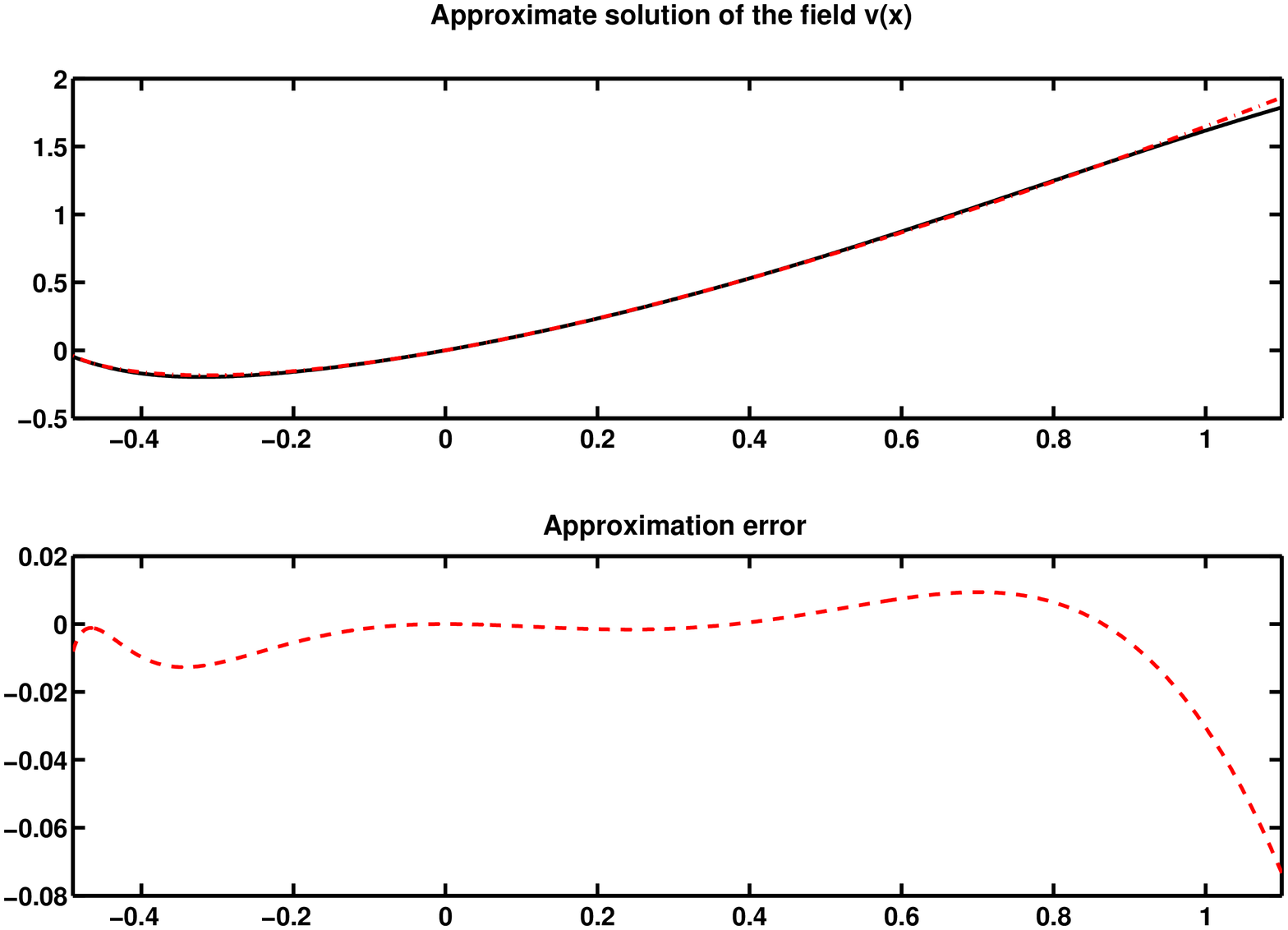}
	\end{center}
	\caption{Exact and approximated field (upper plot) and the approximation error (lower plot) corresponding to Example 3 (subsection \ref{example3}).  Notice that the different between the field and its approximation is only noticeable around $x=1.2$. }
	\label{fig:ex2_3}
\end{figure}

\section{Final remarks}\label{rec:final_r}

In this work, we present a complete description for the inverse problem of the determination of an ODE based on solution values. Conditions of existence, uniqueness and stability of this inverse problem were set for a broad set of functions that allows for practical uses. Here the one-dimensional case has been studied, leaving for future work the case of higher dimensions.

The numerical examples illustrate that the proposed approximate methods are quite robust and can be applied in a wide variety of cases. We explore the close relationship between ODEs and the solution of Julia's equation. The proposed method constitutes an alternative algorithm for the estimation of parameters for ODEs.

\noindent
\textbf{Acknowledgments}

	The authors thank Prof. Dan Marchesin for introducing us to the topic. The second author acknowledges Eng. Ely Maranh\~ao for her helpful comments and motivation of the results. We also are greatful for the collaboration of  Luzia Maranh\~ao, Lia Vinhas de Quiroga, and Hector Quiroga Zambrana.
	
	The second author's work was partially supported by IMPA/CAPES.
	The third author was partially supported  by FAPEMIG under Grant APQ 01377/15. 
	The fourth author  was partially supported by DICYT grant 041933GM from VRIDEI-USACH.

\section*{References}
\bibliographystyle{plain}
\bibliography{inverproble}

\end{document}